\theoremstyle{definition}
\newtheorem{remark}{Remark}
\newtheorem{definition}{Definition}
\theoremstyle{theorem}
\newtheorem{theorem}{Theorem}
\newtheorem{corollary}{Corollary}
\newtheorem{lemma}{Lemma}
\newcommand{\C}{\mathbb{C}}
\newcommand{\N}{\mathbb{N}}
\newcommand{\R}{\mathbb{R}}
\newcommand{\geps}{\varepsilon}
\begin{document}
	\title[Enzyme catalysis with oscillatory substrate and inhibitor supplies]{Intraspecific and monotone enzyme catalysis with oscillatory substrate and inhibitor supplies}
	\author[D\'{i}az-Mar\'{i}n H. G.]{D\'{i}az-Mar\'{i}n Homero G.*}
	\email{homero.diaz@umich.mx}
	
	\author[S\'anchez-Ponce J. L.]{S\'anchez-Ponce Jos\'e L.*}
	\email{0001174k@umich.mx}
	\address{*Facultad de Ciencias F\'{i}sico-Matem\'aticas, Universidad Michoacana. Edificio Alfa, Ciudad Universitaria, C.P. 58040. Morelia, Michoac\'an, M\'exico.}

\begin{abstract}
Enzyme catalysis in reactors for industrial applications usually require an external intervention of the species involved in the chemical reactions. We analyze the most elementary open enzyme catalysis with competitive inhibition where a time-dependent inflow of substrate and inhibitor supplies is modeled by almost periodic functions. We prove global stability of an almost periodic solution for the non-autonomous dynamical system arising from the mass-law action. This predicts a well behaved situation in which the reactor oscillates with global stability. {This is a first case study in the path toward broader global stability results regarding intraspecific and monotone open reaction networks.}
\end{abstract}
	
	\subjclass[2020]{Primary: 34C12, 34C27, 92C45}
	\keywords{Mass-action law, enzyme, reaction network, almost periodic functions, {cooperative systems}, monotone systems}
	\maketitle
	

\section{Introduction}

In reaction networks the law of mass-action leads to a polynomial vector field on a positive cone $\mathbb R^d_{> 0}$ where $d\in\mathbb N$ stands for the number of different species that intervene in the chemical reaction. Moreover, due to linear conservation laws, dynamics may be reduced to polynomial vector fields restricted to a polytope $\mathsf S^\circ\subset \mathbb R^d_{>0}$ called the {\em stoichiometric space}, contained in an affine space, $\mathsf S\subset \mathbb R^d$. For a whole account of such formalism see for instance \cite{Feinberg-2019}.

We will be interested in reaction networks that are open to the influx or efflux of certain species. According to \cite{Feinberg-2019}, this considerations are motivated by models of homogenous reactors, alluding to the transport of substrates and products in and out of the reactor. A similar point of view for open reaction networks is also adopted in the formalism introduced in \cite{Baez-2017,Baez-2018, Baez-2020} for {\em open reaction networks}. Concentration inflows and outflows manifest themselves in the corresponding dynamical system as forcing terms. The examples studied in \cite{Baez-2017,Baez-2018,Baez-2020} and \cite{Feinberg-2019} incorporate constant forcing terms in the {autonomous} system.

For a more comprehensive treatment, we are interested in stability results corresponding to reaction networks with oscillatory (time-dependent) inflows and outflows, i.e. {\em non-autonomous} dynamical systems.

In the periodic case, further study of open reaction networks associated to enzyme catalysis with periodic inputs appear for instance in \cite{Boiteux, Diaz-Osuna-Lara-19, Katriel, Krupska, Stoleriu}, while in \cite{Diaz-Lopez-Osuna-2022c} we describe the almost periodic case. Other stability results in simple protein transcription, regarded as a reaction network, can be obtained for the case of periodic inputs \cite{DelVecchio}. In such work it is stressed the search of a modularity property which would allow to deduce properties of complex reaction networks by considering simple ones. These simple reactions, which may be regarded as building blocks, can be composed or concatenated by its inputs and outputs. Related to such models, in \cite{Diaz-Osuna-Villavicencio-2023} we have proved and generalized to the almost periodic case, a global stability statement properly exemplified and motivated in \cite{DelVecchio}.

As an exploratory case study for a wider project we take the typical reaction where a substrate S is catalysed by an enzyme E, obtaining a complex ES from which we extract a product P,
\[\ce{
	E + S 
		<=>[$k_1$][$k_2$]
	ES
		->[$k_3$]
	E + P
}.\]
Our goal along this work is to extend the study of global stability phenomena dealing with oscillatory inputs and / or outputs. Forcing terms may appear simultaneously without assuming any synchronicity of their frequencies. Accordingly, we consider an {\em inhibitor}, I, adding the reaction,
\begin{equation}
\ce{
	E + I 
		<=>[$k_5$][$k_4$]
	EI
}.
\end{equation}
We get an {\em open} reaction network, where we shall consider an oscillatory source introducing species S,I at a time dependent speed $F_{\rm S} (t)$, $F_{\rm I}(t)$, respectively. Thus, in agreement with \cite{Feinberg-2019} (which can be also adapted to the formalism of \cite{Baez-2017}) a third fictitious reaction is introduced as follows: 
\begin{equation}\label{eq:reaction}
\begin{aligned}
\ce{
	E + S 
		<=>[$k_1$][$k_2$]
	ES
		->[$k_3$]
	E + P
}
\\
\ce{
	E + I 
		<=>[$k_5$][$k_4$]
	EI
}
\\
\ce{
	 I 
		<=>[$\xi$_I][$F$_I$(t)$]
	$\emptyset$
		<=>[$F$_S$(t)$][$\xi$_S]
	S
}
\end{aligned}\end{equation}
where $\xi_{\rm I},\xi_{\rm S}$ are decay rates for the inhibitor and the supply substance respectively.
The corresponding numerical explorations described in \cite{Craciun-2006} for reaction \eqref{eq:reaction} exhibits a globally stable stationary state under the assumption of constant supplies $F_{\rm S},F_{\rm I}\in\mathbb R_{>0}$ in the reactor. When we consider oscillating forcing terms $F_{\rm S}(t),F_{\rm I}(t)$, in this illustration taken from \cite{Craciun-2006}, our main result claims global stability for a positive almost periodic solution.
 
More precisely, we address the dynamics under mass-law action treated by the following coupled system of ordinary differential equations,
\[
\begin{aligned}
\displaystyle
\dot{c}_{\rm S}&=
	k_2c_{\rm ES}-k_1c_{\rm E}c_{\rm S}+F_{\rm S}(t)-\xi_{\rm S}c_{\rm S},
\\
\dot{c}_{\rm I}&=
	k_4c_{\rm EI}-k_5c_{\rm E}c_{\rm I}+F_{\rm I}(t)-\xi_{\rm I}c_{\rm I},
\\
\dot{c}_{\rm E}&=
	k_2c_{\rm ES}-k_1c_{\rm E}c_{\rm S}+k_4c_{\rm EI}-k_5c_{\rm E}c_{\rm I}+k_3c_{\rm ES},
\\
\dot{c}_{\rm ES}&=
	k_1c_{\rm E}c_{\rm S}-k_2c_{\rm ES}-k_3c_{\rm ES},
\\
\dot{c}_{\rm EI}&=
	k_5c_{\rm E}c_{\rm I}-k_4c_{\rm EI},
\\
\dot{c}_{\rm P}&=
	k_3c_{\rm ES},
\end{aligned}
\]
where $c_\sigma$ stands for the concentration of the species $\sigma$. Since $c_{\rm P}$ is completely determined by the remaining differential equations, we can omit the equation corresponding to the product P. Furthermore, we reduce our problem by restricting the dynamics to the stoichiometric (affine) space
\[
	\mathsf{S}\subset\R^5,\quad
	\dim \mathsf{S}=4,
\]
obtained by the conservation law given by the enzyme global constant amount $T>0$,
\begin{equation}\label{eq:law}
	c_{\rm E}+c_{\rm EI}+c_{\rm ES}=T.
\end{equation}
Hopefully, the reduced system
\begin{equation}\label{system}
\begin{aligned}
\dot{c}_{\rm S}&=
	-k_1(T-c_{\rm ES}-c_{\rm EI})c_{\rm S}+k_2c_{\rm ES}+F_{\rm S}(t)-\xi_{\rm S}c_{\rm S},
\\
\dot{c}_{\rm I}&=
	-k_5(T-c_{\rm EI}-c_{\rm ES})c_{\rm I}+k_4c_{\rm EI}+F_{\rm I}(t)-\xi_{\rm I}c_{\rm I},
\\
\dot{c}_{\rm ES}&=
	k_1(T-c_{\rm ES}-c_{\rm EI})c_{\rm S}-(k_2+k_3)c_{\rm ES},
\\
\dot{c}_{\rm EI}&=
	k_5(T-c_{\rm ES}-c_{\rm EI})c_{\rm I}-k_4c_{\rm EI},
\end{aligned}\end{equation}
which we also denote as
\[
	\dot{c}=V(c,F(t))
\]
with parameters $k=(k_1,k_2,k_3,k_4,k_5)\in\mathbb R^5_{>0}$ and $T>0$, would have a global almost periodic attractor.

In order to write down our main result, let us set up the following notation,
\[
u_{\ast}:=\inf_{t\in \mathbb{R}}u(t) \;\; \text{and} \;\; u^{\ast}:=\sup_{t\in \mathbb{R}}u(t).
\]

\begin{theorem}\label{teo:principal}
Assume that $F_{\rm S}(t),F_{\rm I}(t)\geq 0$ are non-constant continuous strictly positive almost periodic input functions in system \eqref{system}. Then there exists a {unique} positive almost periodic solution which is a global attractor lying within the interior of the positive region of the stoichiometric space,
\[
	\mathsf{S}^\circ=\mathsf{S}\cap\mathbb{R}^4_{>0}.
\]
\end{theorem}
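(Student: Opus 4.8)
The plan is to present the system \eqref{system} as a strongly monotone, dissipative skew-product semiflow over the hull $H(F)$ of the almost periodic forcing $F=(F_{\rm S},F_{\rm I})$, and then to invoke the theory of monotone almost periodic systems to obtain a unique, globally attracting, almost periodic solution. The one non-routine ingredient is that \eqref{system} is \emph{not} cooperative as written: competitive inhibition couples $c_{\rm ES}$ and $c_{\rm EI}$ through the free-enzyme factor $T-c_{\rm ES}-c_{\rm EI}$, so that $\partial\dot c_{\rm ES}/\partial c_{\rm EI}=-k_1c_{\rm S}\le 0$ and $\partial\dot c_{\rm EI}/\partial c_{\rm ES}=-k_5c_{\rm I}\le 0$ while every other off-diagonal derivative is $\ge 0$, and the resulting sign digraph carries a negative cycle. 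The device that repairs this is to pass to the aggregated concentrations $\sigma:=c_{\rm S}+c_{\rm ES}$ and $\iota:=c_{\rm I}+c_{\rm EI}$: summing the corresponding reaction equations cancels the bilinear mass-action fluxes and yields the \emph{affine} equations $\dot\sigma=F_{\rm S}(t)-\xi_{\rm S}\sigma+(\xi_{\rm S}-k_3)c_{\rm ES}$ and $\dot\iota=F_{\rm I}(t)-\xi_{\rm I}\iota+\xi_{\rm I}c_{\rm EI}$, together with the equations for $c_{\rm ES}$ and $c_{\rm EI}$ rewritten via $c_{\rm S}=\sigma-c_{\rm ES}$, $c_{\rm I}=\iota-c_{\rm EI}$. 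In the coordinates $(\sigma,\iota,c_{\rm ES},c_{\rm EI})$ the Jacobian has the off-diagonal sign pattern of a path linking $\sigma$ to $c_{\rm ES}$, $c_{\rm ES}$ to $c_{\rm EI}$, and $c_{\rm EI}$ to $\iota$; after a suitable sign change it becomes a Metzler matrix (at least when $\xi_{\rm S}\ge k_3$; the complementary case is commented on below), irreducible on $\mathsf{S}^\circ$, so the induced skew-product semiflow is strongly monotone.

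I would first settle positivity and dissipativity. The closure $\overline{\mathsf{S}^\circ}$ is forward invariant and orbits immediately enter $\mathsf{S}^\circ$: the vector field points strictly inward on $\{c_{\rm S}=0\}$ and $\{c_{\rm I}=0\}$ because $F_{\rm S},F_{\rm I}>0$, it is inward-pointing on $\{c_{\rm ES}=0\}$ and $\{c_{\rm EI}=0\}$, and on $\{c_{\rm ES}+c_{\rm EI}=T\}$ one has $\dot c_{\rm ES}+\dot c_{\rm EI}=-(k_2+k_3)c_{\rm ES}-k_4c_{\rm EI}<0$. Scalar comparison then gives a compact absorbing set: from $\dot\sigma\le F_{\rm S}^{\ast}-\min(\xi_{\rm S},k_3)\,\sigma$ and $\dot\iota\le F_{\rm I}^{\ast}+\xi_{\rm I}T-\xi_{\rm I}\iota$ the variables $\sigma,\iota$, hence all four concentrations, are bounded above; the total bound enzyme $b:=c_{\rm ES}+c_{\rm EI}$ obeys $\dot b\le(T-b)(k_1\bar c_{\rm S}+k_5\bar c_{\rm I})-\min(k_2+k_3,k_4)\,b$, so it stays below a constant $\bar b<T$ and the free enzyme $E=T-b$ above a positive constant; and $\dot c_{\rm S}\ge F_{\rm S}(t)-(k_1T+\xi_{\rm S})c_{\rm S}$ together with $\dot c_{\rm ES}\ge k_1(T-\bar b)c_{\rm S}-(k_2+k_3)c_{\rm ES}$ (and symmetrically for $c_{\rm I},c_{\rm EI}$) bounds each of $c_{\rm S},c_{\rm I},c_{\rm ES},c_{\rm EI}$ below, asymptotically, by a strictly positive almost periodic function. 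In particular the global attractor of the skew-product lies inside $\mathsf{S}^\circ$.

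On the absorbing set the off-diagonal signs of the Jacobian in the $(\sigma,\iota,c_{\rm ES},c_{\rm EI})$ coordinates are constant and the system is irreducible, so the skew-product semiflow on (absorbing set)$\,\times H(F)$ is strongly monotone with a compact global attractor; that attractor is then pinched between a minimal almost periodic solution $c^{-}$ and a maximal one $c^{+}$ with $c^{-}\le c^{+}$ in the relevant order. To collapse $c^{-}$ onto $c^{+}$ — that is, to obtain a \emph{unique} almost periodic solution attracting everything — I would use that the only nonlinearity is the single bilinear mass-action flux. Concretely, either (i) exhibit a common positive vector $d$ with $DV\,d<0$ throughout the absorbing set (after the sign change making $DV$ Metzler), so that the logarithmic norm satisfies $\mu_{\infty,d}(DV)\le-\gamma<0$ and any two solutions contract exponentially in the $d$-weighted sup norm; or (ii) extract from the bilinear flux the appropriate sublinearity/concavity of the vector field along the cone and apply the theory of almost periodic strongly monotone sublinear systems already used in \cite{Diaz-Lopez-Osuna-2022c,Diaz-Osuna-Villavicencio-2023}. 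Either route yields $c^{-}\equiv c^{+}=:c^{\ast}$, a $1$-cover of the almost periodic minimal base $H(F)$ and hence an almost periodic function; it lies in $\mathsf{S}^\circ$ by the previous paragraph, and every solution converges to it since the semiflow is strongly monotone with $c^{\ast}$ its unique minimal set. (The autonomous global stability reported in \cite{Craciun-2006} is the special case $F\equiv\text{const}$ and is recovered along the way.)

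The step I expect to be the genuine obstacle is exactly this last one, the passage from ``strongly monotone and dissipative'' to ``unique, globally attracting almost periodic solution'': strong monotonicity together with dissipativity do not by themselves exclude several coexisting stable almost periodic solutions, so one must really produce — uniformly over $H(F)$ and over the absorbing set, and for all admissible rate constants — the contracting vector $d$ of~(i) or the sublinearity/concavity of~(ii) from the quadratic mass-action terms. Two further points deserve attention: the case $\xi_{\rm S}<k_3$, where the aggregated coordinates no longer render the system cooperative and an additional adjustment (or a non-orthant cone) is needed; and, at a more basic level, the change of variables to $(\sigma,\iota)$ itself, without which \eqref{system} is order-preserving for no orthant and the cooperative-systems toolkit cannot be started at all.
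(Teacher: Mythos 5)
Your cooperative reduction and the positivity/dissipativity estimates are plausible as far as they go, but the proposal leaves two genuine gaps, and the first is precisely the heart of Theorem~\ref{teo:principal}. The passage from ``strongly monotone and dissipative'' to a \emph{unique}, globally attracting almost periodic solution is only sketched: neither route (i) nor route (ii) is carried out, and neither is routine for this system. For (i), a single weight vector $d\gg 0$ with uniformly negative weighted logarithmic norm must reconcile the $\iota$-row requirement $d_4<d_2$ with the $c_{\rm EI}$-row requirement $k_5E\,(d_2-d_4)+k_5c_{\rm I}(d_3-d_4)<k_4d_4$ uniformly for $E\le T$ and $c_{\rm I}$ up to its asymptotic bound, and with the $\sigma$-row requirement $-\xi_{\rm S}d_1+|\xi_{\rm S}-k_3|\,d_3<0$; for arbitrary rate constants (in particular when $\xi_{\rm S}<k_3$ and $k_1T$ is large) these constraints conflict, and no uniform $d$ is exhibited. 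For (ii), the bilinear mass-action flux does not give sublinearity or concavity in the transformed order without an argument you do not supply. The paper closes this step by a completely different and more elementary mechanism: it builds extremal almost periodic solutions $\check c\preceq\hat c$ by a monotone sub/super-solution iteration started from explicit \emph{constant} sub- and super-solutions, subtracts the two copies of \eqref{system}, takes almost periodic mean values, manipulates the resulting linear relations among the means $\delta_\sigma=M[\hat c_\sigma-\check c_\sigma]$ and the mixed means, and concludes $\delta_\sigma=0$ for all species, whence $\check c\equiv\hat c$ by the Parseval-type Lemma~\ref{lma:parseval}; global attraction then follows from the uniqueness clause of Theorem~\ref{teo:Korman}. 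No contraction estimate and no concavity are needed.

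The second gap is the parameter restriction created by your change of variables. When $\xi_{\rm S}<k_3$ one has $\partial\dot\sigma/\partial c_{\rm ES}=\xi_{\rm S}-k_3<0$ while $\partial\dot c_{\rm ES}/\partial\sigma=k_1(T-c_{\rm ES}-c_{\rm EI})>0$ on $\mathsf S^\circ$; since orthant (Kamke) monotonicity forces $J_{ij}$ and $J_{ji}$ to carry the same sign, \emph{no} orthant renders the aggregated system cooperative in that regime, and the ``additional adjustment or non-orthant cone'' you invoke is never produced (the borderline $\xi_{\rm S}=k_3$ also destroys the irreducibility you need for strong monotonicity). The theorem is asserted for all positive rate constants, so this is a real loss of scope. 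Note that the paper avoids the issue altogether: it stays in the original coordinates $(c_{\rm S},c_{\rm I},c_{\rm ES},c_{\rm EI})$ and verifies its tailor-made ``intraspecific and monotone'' sign pattern \eqref{DF^T-bis-Intra} for the grouping $(c_{\rm S},c_{\rm I})$ versus $(c_{\rm ES},c_{\rm EI})$, which holds on the stoichiometric region for every choice of rate constants, and then runs its own sub/super-solution machinery rather than the classical cooperative/skew-product toolkit.
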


For the reader's convenience, we give a self contained text. Consequently, we will review some of the properties of almost periodic functions along Section \ref{sec:AP}. We will also introduce the notion of {\em intraspecific monotonic dynamical systems} that is used for the proof of Theorem~\ref{teo:principal}. Such notion was tacitly used in \cite{Korman} for the periodic non-autonomous cooperative dynamical systems in $\mathbb R^2$. We adapted such tools to prove stability results in~\cite{Diaz-Lopez-Osuna-22} and \cite{Diaz-Osuna-2023} for cooperative and competitive dynamical systems in the almost periodic case for $\mathbb R^2$. We want to stress the fact that the concept of intraspecific monotonicity is stronger than the classical monotonicity property described originally by Hirsch and Smith, see for instance \cite{Hirsch-I, Smith-Book}. Along Section \ref{sec:results} we present the proof of our main results. We finally display numerical examples in Section \ref{sec:examples} which illustrate our main statement.

{This is a first article in a series of works in preparation, such as \cite{DOS-1} coauthored by colaborators. We study as subject general {\em intraspecific and monotone open reaction networks}. We have sketched such reaction network along the introduction. Our aim consists in elucidating global stability for such reaction networks using the tools we have developed for the open enzyme catalysis studied along this exploratory work. In \cite{DOS-1} we will deal with the case of more general intraspecific and monotone open reaction networks using other kinetics instead of just the Michaelis-Menten power mass-law. Another suitable problem which we will address in a future work are the existence of functorial properties for a suitable subcategory associated to this kind of open reaction networks (regarded as open petri nets) in the categorical framework developed by Baez et al. in \cite{Baez-2017,Baez-2018,Baez-2020}.}

\section*{Aknowledgements}

{The authors thank O. Osuna for introducing us to almost periodic functions as well as to the tools developed by Korman in reference \cite{Korman} for 2-dimensional cooperative dynamical systems. We also thank the anonymous referee for carefully reading the first manuscript pointing out some mistaken calculations.}

The authors declare having no conflict of interest.

\section{Intraspecific monotonic dynamical systems}\label{sec:AP}

\subsection{$K$-partial ordering}

Monotone dynamical systems were introduced in \cite{Hirsch-I, Smith-Book}, we just recall some definitions.

\begin{definition}

Given $n,m\in\N$ we define the {\em orthant} $K=\mathbb R_{\geq 0}^n\times \mathbb R_{\leq 0}^m\subset \mathbb R^d$ or
\[\begin{aligned}
	K&=
	\left\{
	u\in\mathbb R^d
	\,:\, u_i\geq 0,
	u_j\leq 0
	,\, i=1,\dots n,\, j=n+1,\dots,n+m=d
	\right\}.
\end{aligned}\]
Notice that $u\in K$ if and only if
\[
	\sum_{h=1}^d
	\lambda_te^{(h)},\quad \lambda_t\geq 0,
\]
where
\[
	e^{(h)}:=(\dots,0,\epsilon_h,0\dots)\in K,
	\qquad
	\epsilon_h=\left\{
	\begin{array}{ll}
	1, &  h=1,\dots, n,
	\\
	-1, & h=n+1,\dots, n+m=d,
	\end{array}\right.
\]
is a basis of $\mathbb R^d$. Thus, the orthant $K$ is a cone which allows us to define the $K$-{\em partial ordering} in $\R^d$ as follows:
\[
	u \preceq v \Leftrightarrow v-u\in K
\]
If $	u \preceq v,$ and $u_{l_0}\neq v_{l_0} $ for some index $l_0,$ then we write down
\[
	u\prec v.
\]
If $u_l\neq  v_l,$ for every index $l=1,\dots,d,$ then we abbreviate these strict inequalities by
\[
	u\prec\prec v.
\]
\end{definition}

Take a non-autonomous system defined in an open domain $(x,y)\in U\subset \mathbb{R}^n\times\mathbb{R}^m$
\begin{equation}\label{Comp}
\begin{split}
&\dot{x}={\rm f}(t,x(t),y(t)),\\
&\dot{y}={\rm g}(t,x(t),y(t)),
\end{split}
\end{equation}
where ${\rm f, g}$ are ${\mathcal C}^1$ w.r.t. $x,$ and continuous w.r.t. $t\geq 0$. Suppose that the Jacobian $DV\left(t,{x},{y}\right)$ of system
\[
	V\left(t,{x},{y}\right)=
	\left(\begin{array}{cc}
	{\rm f}\left(t,{x},{y}\right)\\
	{\rm g}\left(t,{x},{y}\right)
	\end{array}\right)
\]
consists of the submatrices $P=\partial_x{\rm f}\left(t,{x},{y}\right)$, $Q= \partial_y{\rm f}\left(t,{x},{y}\right)$, $R=\partial_x{\rm g}\left(t,{x},{y}\right)$ and $S=\partial_y{\rm g}\left(t,{x},{y}\right)$, so that
\[
	DV(t,{x},{y})=
	\left(\begin{array}{cc}
		P& Q\\
		R& S
	\end{array}\right)
,
\]
where,
\[
\begin{aligned}
	p_{ik}&=\partial_{x_k}{\rm f}_i,
	\qquad 1\leq i\leq n,
	\quad 1\leq k\leq n,
	\\
	q_{il}&=\partial_{y_l}{\rm f}_i,
	\qquad 1\leq i\leq n,
	\quad 1\leq l\leq m,
	\\
	r_{jk}&=\partial_{x_k}{\rm g}_j, \quad
	\qquad 1\leq j\leq m,
	\quad 1\leq k\leq n,
	\\
	s_{jl}&=\partial_{y_l}{\rm g}_j,\quad
	\qquad 1\leq j\leq m,
	\quad 1\leq l\leq m,
\end{aligned}
\]
or \[
	DV(t,x,y)=
	\left(\begin{array}{ccccccc}
		{p_{11}}	& p_{12}& \cdots& p_{1n}&
		q_{11}	& \cdots	& q_{1n}
		\\
		\vdots	& \vdots	& \ddots	&  		&
		\vdots	 & \ddots	& \vdots
		\\
		p_{n1}	& p_{n2}	& \cdots	&{p_{nn}}&
		q_{n1}	& \cdots	& q_{nn}
		\\
		r_{11}	& r_{11}	& \cdots	& r_{1n}	&
		{ s_{11}}	& \cdots	& s_{1n}
		\\
		\vdots	& \vdots	& 		& 		&
		\vdots	& 		& \vdots
		\\
		\vdots	& \vdots	& 		&  		&
		\vdots	& \ddots	& \vdots
		\\
		r_{m1}	&r_{m2}& \cdots	& 		
		r_{mn}&
		s_{m1}	& \cdots	& {s_{mn}}
	\end{array}\right).
\]
\begin{definition}
A system such as \eqref{Comp} is said
to be a $K$-{mononte system} in $\mathbb{R}\times U$, if for every $t \in \mathbb{R},$ and $(x, y)\in U$, the following inequalities hold:
\begin{equation}\label{CompH}
\begin{array}{ll}
	\epsilon_{k}\cdot p_{ik}\geq 0,
	&
	\epsilon_{n+l}\cdot q_{il}\geq 0,
	\\	
	i,k=1,\dots,n;
	& l=1,\dots,m,
	\\
	i \neq k,&
	\\
	\epsilon_{k}\cdot r_{jk}\geq  0,
	&
	\epsilon_{n+l}\cdot s_{jl}\geq  0,
	\\
	k=1,\dots,n;
	&
	j,l=1,\dots,m,
	\\	
	 &
	  j\neq l.
\end{array}
\end{equation}
Furthermore, we say that \eqref{Comp} is $K$-{\em intraspecific and monotone} if
\begin{equation}\label{CompH}
\begin{array}{ll}
	\epsilon_{k}\cdot p_{ik}\geq 0,
	&
	\epsilon_{n+l}\cdot q_{il}\geq 0,
	\\	
	i,k=1,\dots,n;
	& l=1,\dots,m,
	\\
	\epsilon_{k}\cdot r_{jk}\geq  0,
	&
	\epsilon_{n+l}\cdot s_{jl}\geq  0,
	\\
	k=1,\dots,n;
	&
	j,l=1,\dots,m.
\end{array}
\end{equation}

\end{definition}

To illustrate the difference between monotone system and intraspecific and monotone take for instance, in the case of the orthant $K= \mathbb R^n_{\geq 0}\times \mathbb R^m_{\leq 0}$. For the square matrix composed of the diagonal terms of the Jacobian matrix
\begin{equation}\label{eq:Delta-n=m}
	\Delta={\rm diag}\left\{p_{11},p_{22},\dots,p_{nn},s_{11},\dots,s_{mm}\right\}
	={\rm diag}\left\{\Delta_1,\dots, \Delta_{n+m}\right\},
\end{equation}
the monotone case signed entries do not belong to the diagonal of
$
	DV(t,x,y)
$ 
for all $t\in\R$, i.e.
\[
	\left(\begin{array}{ccccccc}
		\Delta_1	& \cdots	& 	 +	&
		-	& \cdots		& -
		\\
		\vdots		& \ddots	& 	 	&
		\vdots	 & \ddots	& \vdots
		\\
		+			& \cdots	&  \Delta_n	& -
			& \cdots		& -
		\\
		-			& \cdots	&  	 -	&\Delta_{n+1}
			& \cdots		& +
		\\
		\vdots		& 		& 		&
		\vdots	& 		& \vdots
		\\
		-		&	\cdots	&  	 -	&
		+		& \cdots	& \Delta_{n+m}
	\end{array}\right),
\]
where positivity or negativity of the entries is not required to be strict.

On the other hand, intraspecific monotonicity require the {\em signed quantities in every entry of the Jacobian} as follows
\[
	\left(\begin{array}{ccccccc}
		+		& \cdots	& 	 +	&
		-	& \cdots		& -
		\\
		\vdots	& \vdots	& \ddots	& 	 	&
		\vdots	 & \ddots	& \vdots
		\\
		+			& \cdots	&  +	& -
			& \cdots		& -
		\\
		-			& \cdots	&  	 -	&+
			& \cdots		& +
		\\
		\vdots		& 		& 		&
		\vdots	& 		& \vdots
		\\
		-			& \cdots	&  	 -	&
		+		& \cdots	& +
	\end{array}\right).
\]
Meanwhile, for the reversed order defined by the inverse cone $-K= \mathbb R^n_{\leq 0}\times \mathbb R^m_{\geq 0}$, the $(-K)$-monotonicity property follows from the signed Jacobian matrix
\begin{equation}\label{DF^T-bis}
	\left(\begin{array}{ccccccc}
		\Delta_1		& \cdots	& 	 -&
		+	& \cdots		& +
		\\
		\vdots		& \ddots	& 	 	&
		\vdots	 & \ddots	& \vdots
		\\
		-				& \cdots	&  \Delta_n	& +
			& \cdots		& +
		\\
		+				& \cdots	&  	 +	&\Delta_{n+1}
			& \cdots		& -
		\\
		\vdots		& 		& 		&
		\vdots	& 		& \vdots
		\\
		+			& \cdots	&  	 +	&
		-		& \cdots	& \Delta_{n+m}
	\end{array}\right).
\end{equation}
and the intraspecific monotonicity can be verified by as the following signs for the entries of the Jacobian
\begin{equation}\label{DF^T-bis-Intra}
	\left(\begin{array}{ccccccc}
		-			& \cdots	& 	 -&
		+	& \cdots		& +
		\\
		\vdots		& \ddots	& 	 	&
		\vdots	 & \ddots	& \vdots
		\\
		-				& \cdots	&  -	& +
			& \cdots		& +
		\\
		+				& \cdots	&  	 +	&-
			& \cdots		& -
		\\
		\vdots		& 		& 		&
		\vdots	& 		& \vdots
		\\
		+			& \cdots	&  	 +	&
		-		& \cdots	& -
	\end{array}\right).
\end{equation}

\begin{definition}
We say that $(\omega(t), \zeta(t))\in\R^n\times \R^m$ is a $K$-\textit{sub-solution} of \eqref{Comp} with respect to the order $\preceq$ if 
\begin{equation}\label{Sub}
	\left({\rm f}(t, \omega(t), \zeta(t)),{\rm g}(t, \omega(t), \zeta(t))\right)
	\succeq \left(\dot{\omega}(t),\dot{\zeta}(t)\right),
	\quad
 \forall t\geq 0.
\end{equation}
Analogously, $(\Omega(t), Z(t))\in\R^n\times\R^m$ is a $K$-\textit{super-solution}  if 
\begin{equation}\label{Sup}
	\left(\dot{\Omega}(t),\dot{Z}(t)\right)
	\succeq
	\left({\rm f}(t, \Omega(t), Z(t)), {\rm g}(t, \Omega(t), Z(t))\right)
	\quad
	\forall t\geq 0.
\end{equation}
We say that a {\em pair sub-super-solution is ordered}, if  we have
\[
(\omega(t),\zeta(t))\preceq (\Omega(t),Z(t));
	\qquad
	\forall t\in\R_{\geq 0}.
\]
\end{definition}

In the case of the reversed order defined by the inverse cone $-K= \mathbb R^n_{\leq 0}\times \mathbb R^m_{\geq 0}$ we say that $(\omega(t), \zeta(t))\in\R^n\times\R^m$ is a $(-K)$-\textit{sub-solution} of \eqref{Comp} with respect to the $(-K)$-order if 
\begin{equation}\label{Sub-bis}
\begin{split}
\dot{\omega}_i&\geq {\rm f}_i(t, \omega(t), \zeta(t)),\qquad 
\dot{\zeta}_j\leq {\rm g}_j(t, \omega(t), \zeta(t)),\\
 \forall\,  i&=1,\dots,n;\quad j=1,\dots,m;\quad t \geq0.
\end{split}
\end{equation}
Analogously, $(\Omega(t), Z(t))$ is a $(-K)$-\textit{super-solution}  if 
\begin{equation}\label{Sup-bis}
\begin{split}
\dot{\Omega}_i&\leq {\rm f}_i(t, \Omega(t), Z(t)),\qquad
\dot{Z}_j\geq {\rm g}_j(t, \Omega(t), Z(t)),\\
 \forall\,  i&=1,\dots,n;\quad j=1,\dots,m;\quad t \geq 0.
\end{split}
\end{equation}

According to Kamke, condition \eqref{CompH} is an infinitesimal characterization of the basic fact that the flow of such systems preserves the partial ordering we have just defined. This property is stated as follows.

\begin{lemma}\label{lma:Comp-preserving}
Suppose that $(\omega(t),\zeta(t))\prec\prec (\Omega(t),Z(t))$ is a sub- super-solution pair with respect to the $K$-ordering of $\R^d$. Then the flow of the monotone system \eqref{Comp} preserves the order $\prec$. That is, if a solution $({\rm x}(t),{\rm y}(t))$ has initial condition such that
\begin{equation}\label{initial}
	(\omega(0),\zeta(0))\prec \prec
	({\rm x}(0),{\rm y}(0))\prec\prec
	(\Omega(0),Z(0)).
\end{equation}
Then
\begin{equation}\label{Comp-IV-1}
	(\omega(t),\zeta(t))\prec \prec
	({\rm x}(t),{\rm y}(t))\prec\prec
	(\Omega(t),Z(t))	,
	\qquad
	\forall
	t\geq 0.
\end{equation}
\end{lemma}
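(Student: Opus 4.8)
The plan is to derive Lemma~\ref{lma:Comp-preserving} from the classical Kamke--Müller comparison theorem adapted to the cone $K$, using a standard perturbation-and-limit argument to pass from strict to weak differential inequalities. First I would reduce the two-sided conclusion to two one-sided statements: it suffices to show that $(\omega(t),\zeta(t))\prec\prec({\rm x}(t),{\rm y}(t))$ and $({\rm x}(t),{\rm y}(t))\prec\prec(\Omega(t),Z(t))$ separately for all $t\ge 0$, since together these give \eqref{Comp-IV-1}. By the symmetry between sub- and super-solutions (replace $t$ by the pair ordering with the roles reversed), I would treat only the super-solution side, i.e. show that if $({\rm x}(0),{\rm y}(0))\prec\prec(\Omega(0),Z(0))$ and $(\Omega,Z)$ satisfies \eqref{Sup}, then $({\rm x}(t),{\rm y}(t))\prec\prec(\Omega(t),Z(t))$ for $t\ge 0$.

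The core step is a continuity/first-exit argument. Let $T^{\ast}=\sup\{\,T>0 : ({\rm x}(t),{\rm y}(t))\prec\prec(\Omega(t),Z(t))\text{ for }0\le t\le T\,\}$, which is positive by the open strict inequality at $t=0$ and continuity of solutions. Suppose for contradiction that $T^{\ast}<\infty$. Then at $t=T^{\ast}$ at least one coordinate of the difference $D(t):=(\Omega(t),Z(t))-({\rm x}(t),{\rm y}(t))$, read in the cone-adapted sign $\epsilon_h D_h(t)$, first hits $0$ while all coordinates were positive on $[0,T^{\ast})$. Say it is coordinate $h_0$, so $\epsilon_{h_0}D_{h_0}(T^{\ast})=0$ and $\tfrac{d}{dt}\big(\epsilon_{h_0}D_{h_0}\big)(T^{\ast})\le 0$. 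Now I compute that derivative from the differential (in)equalities: it is bounded below by $\epsilon_{h_0}$ times the $h_0$-component of $V(T^{\ast},\Omega,Z)-V(T^{\ast},{\rm x},{\rm y})$. Writing this difference via the mean value theorem along the segment joining $({\rm x},{\rm y})$ to $(\Omega,Z)$ — note this segment lies in the domain and, by convexity of $K$, respects the ordering — the $h_0$-component equals a sum $\sum_{g}\big(\text{Jacobian entry}\big)_{h_0 g}\,D_g(T^{\ast})$. All off-"diagonal-in-$h_0$" terms $g\ne h_0$ have $\epsilon_{h_0}\cdot(\text{entry})_{h_0 g}\,\epsilon_g^{-1}\ge 0$ by the $K$-monotonicity condition \eqref{CompH}, and $\epsilon_g D_g(T^{\ast})\ge 0$ since those coordinates have not yet reached $0$ (here we use $\prec\prec$, hence $D_g(T^{\ast})>0$ for $g\ne h_0$ in the strict case, or $\ge 0$ in general); the single $g=h_0$ term vanishes because $D_{h_0}(T^{\ast})=0$. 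Hence $\tfrac{d}{dt}\big(\epsilon_{h_0}D_{h_0}\big)(T^{\ast})\ge 0$, and combined with the opposite inequality one gets that this quantity is exactly $0$ together with all the $g\ne h_0$ contributions; iterating or invoking a uniqueness argument for the linearized variational equation then forces $D\equiv 0$ on a neighborhood, contradicting $({\rm x}(0),{\rm y}(0))\ne(\Omega(0),Z(0))$. To make the strict conclusion clean rather than merely $\preceq$, the cleanest device is to perturb: replace $(\Omega,Z)$ by $(\Omega+\delta e,Z+\delta e)$ (or add a small drift to the vector field) so that the super-solution inequality becomes strict, apply the weak comparison to get $\preceq$, and let $\delta\to 0^{+}$; strictness at later times then follows from running the comparison forward from any $t_0>0$ where strictness still holds, using that the set where strict inequality holds is open and, by the argument above, also closed in $(0,\infty)$.

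The main obstacle I expect is precisely the passage from the non-strict comparison (which Kamke's condition \eqref{CompH} delivers directly) to the strict inequalities $\prec\prec$ claimed in \eqref{Comp-IV-1}: one must rule out that two solutions touch in some coordinate at a later time without coinciding, which requires either the perturbation trick above or an irreducibility-type hypothesis on the Jacobian that is \emph{not} assumed here. I would handle this by exploiting the specific structure — in the intended application the relevant off-diagonal entries of $DV$ are strictly signed on $\mathsf S^{\circ}$ — but for the general lemma the perturbation argument is the safe route, and I would state the proof so that it uses only \eqref{CompH} plus the openness of the strict cone. A secondary technical point is checking that the mean-value segment stays in the domain $U$ and that the almost periodic / merely continuous time-dependence causes no trouble; both are routine since $DV$ is continuous in $t$ and $\mathcal C^1$ in $(x,y)$, and the estimates above are pointwise in $t$.
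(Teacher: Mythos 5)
Your overall route -- reduce to a one-sided comparison, run a first-exit-time argument, strictify by perturbation, pass to the limit, then recover strictness -- is the classical Kamke--M\"uller scheme and differs from the paper's proof, which instead multiplies each coordinate of the difference by the integrating factor $e^{-\int\Delta_h}$ (with $\Delta_h$ the diagonal Jacobian entries of \eqref{eq:Delta-n=m}); that device absorbs the unsigned diagonal term, makes $\mu_h$ times the difference monotone up to the first crossing time, and so produces a direct contradiction with the crossing \emph{and} the strict inequalities in one stroke, with no perturbation or limit needed. Your scheme could be made to work, but as written it has two genuine gaps. First, the concrete perturbation you propose, replacing $(\Omega,Z)$ by $(\Omega+\delta e,\,Z+\delta e)$ with a constant cone-direction shift, does not make the super-solution inequality \eqref{Sup} strict: by the sign conditions \eqref{CompH} the off-diagonal contributions to $f_i(t,\Omega+\delta e,Z+\delta e)-f_i(t,\Omega,Z)$ are \emph{nonnegative} (they push $f_i$ up, i.e.\ against the super-solution inequality), and the diagonal contribution $p_{ii}\delta$ is unsigned, so the shifted pair may fail to be a super-solution at all. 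One needs a time-dependent perturbation such as $\delta e^{Mt}u$ with $M$ dominating $\|DV\|$ on the relevant compact set, or the vector-field perturbation you mention only parenthetically (compare with solutions of $\dot u=V(t,u)-\delta c$, $c$ in the interior of $K$, and use continuous dependence as $\delta\to0^+$); neither is carried out.

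Second, the passage back from $\preceq$ to the claimed $\prec\prec$ in \eqref{Comp-IV-1} is not actually proved: the assertion that the set of times of strict ordering is ``also closed by the argument above'' does not follow, since the first-exit computation only shows that the derivative of the touching coordinate is $\geq 0$ at the touching instant, which does not exclude touching; and the alternative aside about ``uniqueness for the linearized variational equation'' is invalid here because $(\Omega,Z)$ is only a super-solution, so the difference satisfies a differential \emph{inequality}, not an equation, and no forward or backward uniqueness is available. The clean repair is exactly the paper's device applied after your weak comparison: once $D(t)\succeq 0$ for all $t$, the sign conditions \eqref{CompH} give, coordinate-wise, $\epsilon_h\dot D_h\geq \Delta_h\,\epsilon_h D_h$, whence by Gronwall $\epsilon_h D_h(t)\geq \epsilon_h D_h(0)\exp\left(\int_0^t\Delta_h\right)>0$ for every $h$ with $\epsilon_h D_h(0)>0$, which is the strict conclusion. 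With these two repairs your argument becomes a correct, more classical alternative to the paper's integrating-factor proof; without them the key steps (strictification and strictness propagation) are missing.
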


This claim appears for instance in \cite{Hirsch-I} where it is stated that the r\^ole of the relation $\prec\prec$ can be also performed for by $\prec,\preceq$.

\begin{proof}[Proof of Lemma \ref{lma:Comp-preserving}] {Define $w(t):={\rm x}(t)-\omega(t)$, and $z(t):={\rm y}(t)-\zeta(t)$, so that ${\rm f}(t,\omega,\zeta)\geq \dot{\omega}$ implies
\[\begin{aligned}
	\dot{w}&=
	{\rm f}(t,{\rm x},{\rm y})-\dot{\omega}
	\\
	&={\rm f}(t,\omega,\zeta)- \dot{\omega}
+
	\partial_x{\rm f}\left(t,{\rm x}^{\theta_0},{\rm y}^{\theta_0}\right)^\dagger({\rm x}-\omega) 
	+
	\partial_y{\rm f}\left(t,{\rm x}^{\theta_0},{\rm y}^{\theta_0}\right)^\dagger({\rm y}-\zeta),
\\
	\dot{w}_i&\geq 
	\left\langle \partial_x{\rm f}_i\left(t,{\rm x}^{\theta_0},{\rm y}^{\theta_0}\right),
	({\rm x}-\omega) \right\rangle
	+
	\left\langle\partial_y{\rm f}_i\left(t,{\rm x}^{\theta_0},{\rm y}^{\theta_0}\right),
	({\rm y}-\zeta)\right\rangle
	\\
	&=(P w+Q z)_i,
\end{aligned}\]
where ${\rm x}^\theta(t)=\theta {\rm x}(t)+(1-\theta)\omega(t)\in\R^n,$ ${\rm y}^\theta(t)=\theta{\rm y}(t)+(1-\theta) \zeta(t)\in \R^m$, denote an intermediate point for every $\theta\in [0,1]$. The usual inner product in $\R^N$ for $N=n,m$ is denoted by $\langle\cdot,\cdot\rangle$.

If we proceed analogously with $\dot{z}=f(t,{\rm x, y})-\dot{\zeta}$, then
\begin{equation}\label{inequalities}
	\left(\begin{array}{c}
 	\dot{w}
	\\
	\dot{z}
	\end{array}\right)
	\succeq
	\left(\begin{array}{c}
	P w + Q z
	\\
	R w+S z
	\end{array}\right)
	,\qquad
	\left(\begin{array}{c}
 	{w}(0)
	\\
	{z}(0)
	\end{array}\right)
	\succ\succ
		\left(\begin{array}{c}
 	\vec{0}
	\\
	\vec{0}
	\end{array}\right).
\end{equation}

We denote the primitives of 
\[
	\Delta={\rm diag}\left\{\Delta_1,\dots,\Delta_{n+m}\right\}
\]
as
\[\begin{aligned}
	\mu_h(t)&=e^{-\int^t \Delta_h}>0,\qquad h=1,\dots,n,
	\\
	\nu_h(t)&=e^{-\int^t \Delta_h}>0,\qquad h=n+1,\dots,n+m.
\end{aligned}\]
\medskip
Then the proof we are looking for will follow from the following assertion.

{\em Claim:} Each $\mu_iw_i$ is monotone increasing for $i=1,\dots,n,$ with $w_i(0)>  0$. Also each $\nu_jz_j$, $j=1,\dots,m$, is monotone decreasing with $z_j(0)< 0$.

\medskip
We now proceed this claim. We observe that,
\[
\begin{aligned}
	\frac{d}{dt}
	\left(\begin{array}{c}
		\mu \centerdot w
		\\
		\nu \centerdot z
	\end{array}\right)
	&=
	\left(\begin{array}{c}
		\mu \centerdot \dot{w}
		\\
		\nu \centerdot  \dot{z}
	\end{array}\right)
	+\left(\begin{array}{c}
		\dot{\mu}\centerdot w
		\\
		\dot{\nu}\centerdot z
	\end{array}\right)
	\\
	&\succeq
		\left(\begin{array}{c}
	\mu\centerdot(P w + Q z)
	\\
	\nu\centerdot(R w+S z)
	\end{array}\right)
	-\Delta
	\left(\begin{array}{c}
		{\mu} 
		\\
		{\nu} 
	\end{array}\right)\centerdot
	\left(\begin{array}{c}
		w
		\\
		z
	\end{array}\right)
	\\
	&= 
\left(\begin{array}{c}
		{\mu} 
		\\
		{\nu} 
	\end{array}\right)\centerdot
	\left[
	\left(\begin{array}{cc}
	P & Q
	\\
	R&S
	\end{array}\right)
	-\Delta\right]
	\left(\begin{array}{c}
		w
		\\
		z
	\end{array}\right),
\end{aligned}
\]
or
\begin{equation}\label{eq:d/dt}
	\frac{d}{dt}
	\left(\begin{array}{c}
		\mu \centerdot w
		\\
		\nu\centerdot  z
	\end{array}\right)
	\succeq
	\left(\begin{array}{c}
		{\mu} 
		\\
		{\nu} 
	\end{array}\right)
	\centerdot
	\left[
	DV(t,x,y)
	-\Delta\right]
	\left(\begin{array}{c}
		w
		\\
		z
	\end{array}\right),
\end{equation}
where $u\centerdot v\in\R^N$ for $u,v\in\R^N$, denotes the following associative product
\[
	u\centerdot v:=({\rm diag\,}u)v=(u_1v_1,\dots,u_Nv_N)\in \R^N,\, N=n,m\text{ or }n+m.
\]
Remark that the matrix $DV-\Delta$ is the same as $DV$ but with zeros along its diagonal.

Define
\[\begin{aligned}
	t_{w_i}&:= \inf\{t>0\,:\,{w}_i(t)<0\}\geq 0,\qquad
	i=1,\dots,n,
	\\
	t_{z_j}&:=\inf\{t>0\,:\,{z}_j(t)>0\}\geq 0,\qquad
	j=1,\dots,m,
\end{aligned}\]
all of them in $\R\cup\{\infty\}$. By the very definition, there are intervals
\[ 
	\dot{w}_i(t)<0,\quad
	\forall t\in(t_{w_i},t_{w_i}+\epsilon),\qquad
	\dot{z}_j(t)>0,\quad\forall t\in(t_{z_j},t_{z_j}+\epsilon).
\] 
Moreover,
\begin{equation}\label{eq:*}
	{w}_i(t_{w_i})=0,\qquad
	{z}_j(t_{z_j})=0,
\end{equation}
while 
\[
	{w}_i(t)\geq  0,\qquad
	{z}_j(t) \leq  0,
\]
for every $t\in [0,t_{w_i}]$ and $t\in [0,t_{z_j}],$ respectively. Furthermore, for $\geps>0$ small enough we can also consider $w_i(t)<0$ or $z_j(t)>0$ along those intervals, $(t_{w_i},t_{w_i}+\geps)$ and $(t_{z_j},t_{z_j}+\geps),$ respectively.

Suppose that the variables
	$v_h,$ with $h=1,\dots,n+m$
run over both sets of values $w_i,z_j$ for $i=1,\dots,n,$ and $j=1,\dots,m$ in such a way that
\[
	0<t_{v_1}\leq t_{v_2}\leq \dots\leq t_{v_{n+m}}.
\]

If $t_{v_1}=\infty$ then we are done. To prove our claim we proceed by contradiction. Suppose that
\[
	0<t_{v_1}<\infty,\qquad
	t_{v_1}\leq t_{v_{j}}\leq\infty.
\]
Then
\[
	\left(\begin{array}{c}
	w(t)
	\\
	z(t)\end{array}
	\right)\succeq \left(
	\begin{array}{c}\vec{0}
	\\
	\vec{0}\end{array}\right),
\]
Hence, when we substitute monotonicity conditions \eqref{CompH}
in \eqref{eq:d/dt}, we obtain
\[
	\left.\frac{d}{dt}\right\vert_{t\leq t_{v_1}}\left(\begin{array}{c}
		\mu \centerdot w
		\\
		\nu \centerdot z
	\end{array}\right)
	\succeq
	\left(\begin{array}{c}
		\vec{0} 
		\\
		\vec{0} 
	\end{array}\right),
		\qquad \forall t\in[0,t_{v_1}].
\]
Indeed, if for instance ${v}_{1}=w_{i_1}$ for certain index $i_1$ fixed, then for $t\in (t_{w_{i_1}},t_{w_{i_1}}+\epsilon)$, we have that $w_{i_1}(t)\geq0\leq z_{i_1}(t)$, because for every $t\in (t_{w_{i_1}},t_{w_{i_1}}+\epsilon)$,
\[\begin{aligned}
	\frac{d}{dt}\left({\mu}_{i_1}{w}_{i_1}\right)(t)
	&
	\geq
	{\mu}_{i_1}(t)\left[\sum_{k=1,\,k\neq i_1}^np_{i_1k}(t){w}_{k}(t)
	+
	\sum_{l=1}^mq_{i_1l}(t){z}_{l}(t)
	\right]\geq 0.
\end{aligned}\]
Whence, we obtain a contradiction with \eqref{eq:*}, namely,
\[
	\mu_i(t_{w_i})w_i(t_{w_i})\geq \mu_i(0)w_i(0)>0,
\]
or $w_{i_1}(t_{w_{i_1}})>0$ (not $w_{i_1}(t_{w_{i_1}})=0$).

We can reach a similar contradiction by supposing $v_1=z_j$ for some $j$ fixed.

We have thus proved that $\mu w,$ $\nu z$ are monotone increasing and decreasing respectively, with $(w(0),z(0))\succ\vec{0}$. {Therefore, $(w(t),z(t))\succ \vec{0}$ for all $t\geq 0$. This proves a couple of inequalities in \eqref{Comp-IV-1}. The other couple can be proved analogously.}}	\end{proof}

In the reversed order of $-K$ a pair sub-super-solution  if a solution $({\rm x}(t),{\rm y}(t))$ has initial condition such that
\begin{equation}\label{initial2}\begin{split}
	\omega_i(0)&>{\rm x}_i(0)>\Omega_i(0),
	\text{ and }
	Z_j(0)>{\rm y}_j(0)>\zeta_j(0),\\
	 \forall\,  i&=1,\dots,n;\quad j=1,\dots,m.
	\end{split}
\end{equation}
Then
\begin{equation}\label{Comp-IV-1-bis}
{\begin{split}
	\omega_i(t)&>{\rm x}_i(t)  > \Omega_i(t),\quad
	Z_j(t)> {\rm y}_j(t)> \zeta_j(t),\\
	\forall\,  i&=1,\dots,n;\quad j=1,\dots,m,\qquad  t\geq 0.
\end{split}}
\end{equation}

\section{Almost periodic functions}

We now recall basic concepts and facts about almost periodic functions, see \cite{Bohr, Corduneanu} for an exhaustive study.

\begin{definition}\label{ALF}
The space of {\em almost periodic functions} is the closure $\overline{\mathcal T}= \mathcal{AP}(\mathbb R,\mathbb C)$ of the algebra $\mathcal T$ of all trigonometric polynomials 
\[
	c_0+c_1e^{\lambda_1t}+\dots+c_ne^{\lambda_nt}
\]
whose frequency set, $\{\lambda_1\dots,\lambda_N\}\subset \mathbb R$ is {\em arbitrary} and $c_k\in\mathbb C$ for $k=1,\dots,N$. We consider $\mathcal T$ as a subspace of the space of bounded continuous functions $\mathcal{CB}(\mathbb R,\mathbb C)$ with the $\sup$-norm.
\end{definition}

 We just write down the main properties of the space $\mathcal{AP}(\mathbb R,\mathbb C)$:
 \begin{enumerate}[label=\roman*)]
 \item Every $\phi\in\mathcal{AP}(\R,\C)$ is uniformly continuous.
 \item $\mathcal{AP}(\R,\C)$ is a Banach algebra.
 \item For every $\phi\in\mathcal{AP}(\R,\C)$ there exists a numerable collection of frequencies $\{\lambda_k\}_{k=1}^\infty\subset\mathbb R$ whose corresponding {\em Fourier coefficients}:
 \[
 	c[\phi,\lambda_k]=\lim_{t\rightarrow\infty}\frac{1}{t-t_0}\int_{t_0}^t\phi(s)\cdot e^{-{\rm i}\lambda_ks}ds
 \]
 which do not vanish and do not depend on $t_0$. There exists an associated complex Fourier series
 \[
 	\phi(t)\sim \sum_{k=1}^\infty c[\phi,\lambda_k]e^{{\rm i}\lambda_kt}.
 \]
 \item For every $\phi\in\mathcal{AP}(\R,\C)$, there exists the {\em mean value},
 \[
 	M[\phi]=\lim_{t\rightarrow\infty}\frac{1}{t-t_0}\int_{t_0}^t\phi(s)\,ds,
 \]
 which is a well defined positive linear continuous functional, $M:\mathcal{AP}(\R,\C)\rightarrow\C$, regardless of $t_0\in\mathbb R$.
 
 \item For every $\phi\in\mathcal{AP}(\R,\C)$, the Parseval's equality holds:
 \[
 	 M\left[|\phi|^2\right]=\sum_{k=1}^\infty \left|c[\phi,\lambda_k]\right|^2.
 \]
 
 \end{enumerate}

\begin{definition}
A continuous function $f:\R\times U\rightarrow\R$ is said to be {\em uniformly almost periodic} with respect to $x\in U\subset\R^n$ if for every compact $K\subset U$,
\[
	|f(t + \tau,x) - f (t,x)| < \geps, \forall \in\R, \forall x \in U
\]
for each translation number, $\tau \in T(\geps,f,K)$, and any length $\ell(\geps,f,K) > 0$, not depending on a particular choice $x$ remaining the same on compact set $K\subset U$.
\end{definition}

More specifically, if $f$ have {\em real} Fourier expansions,
\[
	f(t,x) \sim \overline{f}(x) + \sum_{k=0}^\infty a[f, \lambda_k;x] \cos(\lambda_kt) + b[f, \lambda_k;x] \sin(\lambda_kt),
\]
then $f$ is uniformly almost periodic, whenever the Fourier frequencies $\lambda_k$ do not depend on $x$, see \cite{Corduneanu} Chapter VI.

\section{Proof of existence of an almost periodic asymptotic limit}\label{sec:results}

The following claim is a generalization to $\R^d,\, d\geq 3$ of a statement proven in \cite{Korman} for a periodic cooperative system and in \cite{Diaz-Osuna-2023} for competitive systems in $\mathbb R^2$.

\begin{theorem}[Existence and global asymptotic limit]\label{teo:Korman}
Assume that the system \eqref{Comp} has components, ${\rm f,g},$ which are uniformly almost periodic with respect to $(x,y)\in D\subset\R^n\times \R^m,$ of class ${\mathcal C}^1$ in $(x,y)$. The following assertions hold true:
\begin{enumerate}[label=\Roman*.]
\item If it is $K$-{\em intraspecific and monotone} with an almost periodic sub-super-solution pair
\[
	(\omega(t), \zeta(t))\prec\prec(\Omega(t), Z(t))
\]
that are not necessarily solutions of \eqref{Comp}. Then the system has an almost periodic solution $({x}^\star(t), {y}^\star(t))$, satisfying 
\begin{equation}\label{eqn:periodic}
(\omega(t),\zeta(t)) \preceq  ({x}^\star(t),{y}^\star(t))\preceq
(\Omega(t), Z(t)), \quad\forall t\geq 0.
\end{equation}
\item Any solution $({\rm x}(t) ,{\rm y}(t))$ of \eqref{Comp} with initial condition
\begin{equation}\label{eq:condicion-inicial}
	(\omega(0), \zeta(0))\prec\prec ({\rm x}(0) ,{\rm y}(0))
	\prec\prec(\Omega(0),Z(0)),
\end{equation}
satisfy one of the following properties:
\begin{enumerate}
\item Either $({\rm x}(t) ,{\rm y}(t))$ converges asymptotically towards an almost periodic solution $({x}^\star(t),{y}^\star(t))$,
\begin{equation}\label{eq:opcion1}
\lim_{t\rightarrow\infty}\left\|({\rm x}(t) ,{\rm y}(t))-
({x}^\star(t),{y}^\star(t))\right\|=0
\end{equation}

\item There are almost periodic solutions $({x}^\star(t),{y}^\star(t))\preceq({x}_\star(t),{y}_\star(t))$ and $T\geq 0$ such that for $t\geq T\geq 0$
\begin{equation}\label{eq:opcion2}
	\left({x}^\star(t),{y}^\star(t)\right)
	\preceq
	\left({\rm x}(t) ,{\rm y}(t)\right)
	\preceq
	\left({x}_\star(t),{y}_\star(t)\right).
\end{equation}
\end{enumerate}
\end{enumerate}
If there is {\em just one almost periodic solution} $(x(t),y(t))$ satisfying the initial condition \eqref{eq:condicion-inicial-x}. Then we conclude global asymptotic almost periodic limit, i.e. any solution $({\rm x}(t),{\rm y}(t))$ of \eqref{Comp} satisfying the initial condition \eqref{eq:condicion-inicial} converges asymptotically to the unique almost periodic attractor $(x(t),y(t))$,
\begin{equation}\label{eq:opcion3}
\lim_{t\rightarrow\infty}\left\|({\rm x}(t) ,{\rm y}(t))-
({x}(t),{y}(t))\right\|=0.
\end{equation}
\end{theorem}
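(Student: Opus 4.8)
The plan is to deduce every assertion from the order-preservation Lemma~\ref{lma:Comp-preserving} together with the compactness that is built into almost periodic data. Write $u=({\rm x},{\rm y})$ and abbreviate the pairs $(\omega,\zeta)$, $(\Omega,Z)$ by $\omega,\Omega$. Fix once and for all a sequence $\tau_n\to\infty$ of common $\varepsilon_n$-translation numbers of $F$ and of the almost periodic functions $\omega,\zeta,\Omega,Z$, with $\varepsilon_n\downarrow 0$; such a sequence exists because a finite family of almost periodic functions admits a relatively dense set of common translation numbers. Let $u_-$ and $u_+$ be the solutions of \eqref{Comp} with $u_-(0)=\omega(0)$ and $u_+(0)=\Omega(0)$. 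Since $\omega$ is a sub-solution and $\Omega$ a super-solution, Lemma~\ref{lma:Comp-preserving} (used with $\preceq$ in place of $\prec\prec$) gives $\omega(t)\preceq u_-(t)\preceq u_+(t)\preceq\Omega(t)$ for all $t\ge 0$; in particular all three solutions stay, for every $t$, inside the fixed compact order interval determined by the bounded functions $\omega,\zeta,\Omega,Z$, on which the field $V(\cdot,F(\cdot))$ --- being ${\mathcal C}^1$ in the state and almost periodic, hence uniformly continuous, in $t$ --- is bounded and Lipschitz. Hence the translates $u_\pm(\cdot+\tau_n)$ form uniformly bounded, equi-Lipschitz families.

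By the Arzel\`{a}--Ascoli theorem, extract a subsequence (not relabelled) along which $u_-(\cdot+\tau_n)\to(x^\star,y^\star)$ and $u_+(\cdot+\tau_n)\to(x_\star,y_\star)$ uniformly on compact time intervals. As $V(\cdot,F(\cdot+\tau_n))\to V(\cdot,F(\cdot))$ uniformly on the trapping region, both limits solve \eqref{Comp}; and since $\omega(\cdot+\tau_n)\to\omega$, $\Omega(\cdot+\tau_n)\to\Omega$ and $u_-(\cdot+\tau_n)\preceq u_+(\cdot+\tau_n)$, letting $n\to\infty$ yields $\omega\preceq(x^\star,y^\star)\preceq(x_\star,y_\star)\preceq\Omega$, which is \eqref{eqn:periodic}. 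The substantial point is that $(x^\star,y^\star)$ and $(x_\star,y_\star)$ are \emph{almost periodic} and that $u_-$ (respectively $u_+$) is asymptotically almost periodic with that limit. I intend to get this along the Korman-type scheme of \cite{Korman,Diaz-Osuna-2023}: for a common $\varepsilon$-translation number $\tau$ of all the data, $u_-(\cdot+\tau)$ solves a system uniformly $\varepsilon$-close to \eqref{Comp}, and combining $u_-(\tau)\succeq\omega(\tau)$ with $|\omega(\tau)-\omega(0)|<\varepsilon$ one produces an almost-monotone comparison $u_-(t+\tau)\succeq u_-(t)$ valid up to an $O(\varepsilon)$ error in each coordinate, propagated forward in $t$ precisely by the \emph{intraspecific} monotonicity inequalities --- here plain monotonicity would not suffice, because the perturbation must be controlled entry by entry. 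Letting $\varepsilon\to 0$ along $\tau_n$ upgrades this to $\sup_{t\ge 0}\|u_-(t+\tau_n)-u_-(t)\|\to 0$, i.e.\ asymptotic almost periodicity with a subsequence-independent profile; that profile is $(x^\star,y^\star)$, the smallest almost periodic solution in the sandwich, and symmetrically $(x_\star,y_\star)$ is the largest one. This proves part~I.

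For part~II, let $u$ solve \eqref{Comp} with $\omega(0)\prec\prec u(0)\prec\prec\Omega(0)$. Order preservation gives $u_-(t)\preceq u(t)\preceq u_+(t)$ for all $t\ge 0$. Applying the extraction and the almost-periodicity argument above to suitable time-translates of $u$, the forward limit behaviour of $u$ is confined between a smallest almost periodic solution $(x^\star,y^\star)$ and a largest one $(x_\star,y_\star)$ with $(x^\star,y^\star)\preceq(x_\star,y_\star)$, both lying between $\omega$ and $\Omega$; as the order interval bounded by the two genuine solutions $(x^\star,y^\star)$, $(x_\star,y_\star)$ is positively invariant under the order-preserving flow, $u$ is eventually trapped in it, which is \eqref{eq:opcion2} for some $T\ge 0$. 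When the two almost periodic solutions coincide, this collapses to the convergence \eqref{eq:opcion1}; that is the alternative (a)/(b). The closing assertion then follows at once: if \eqref{Comp} has a \emph{unique} almost periodic solution inside the sandwich, then $(x^\star,y^\star)$ and $(x_\star,y_\star)$ both equal it, alternative (b) degenerates into (a), and every solution with initial datum \eqref{eq:condicion-inicial} converges to this unique almost periodic attractor, i.e.\ \eqref{eq:opcion3} holds.

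The main obstacle is the content of the second paragraph: turning the ``almost monotone along almost-periods'' heuristic into a genuine uniform Cauchy estimate that forces $(x^\star,y^\star)$ and $(x_\star,y_\star)$ to be almost periodic --- and, for part~II, forces the forward limit set of an arbitrary interior solution to be squeezed between two such solutions. Everything else reduces to the comparison principle, the Arzel\`{a}--Ascoli theorem, and the elementary invariance of the order interval bounded by two solutions.
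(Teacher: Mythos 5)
There is a genuine gap, and you in fact name it yourself: the entire weight of the argument rests on the unproven claim that the Arzel\`a--Ascoli limits of the translates $u_\pm(\cdot+\tau_n)$ are \emph{almost periodic} solutions and that $u_\pm$ are asymptotically almost periodic. The ``almost monotone along almost-periods'' estimate you sketch is exactly the step that fails to be routine: in the almost periodic (non-periodic) setting a translation number $\tau$ is only an approximate symmetry, so the comparison $u_-(t+\tau)\succeq u_-(t)-O(\varepsilon)$ must be propagated on the unbounded interval $t\ge 0$, and the $O(\varepsilon)$ error is not obviously controlled uniformly in $t$ (Gronwall-type propagation of the perturbation grows with $t$, and the order-preservation lemma by itself gives no contraction). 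This is not a technicality: for monotone systems with almost periodic forcing the limiting dynamics can in general be almost automorphic rather than almost periodic, so some additional mechanism is genuinely required, and your proposal does not supply one. Consequently part I is not established, and the trapping argument for part II(b) (``the forward limit behaviour of $u$ is confined between a smallest and a largest almost periodic solution'') inherits the same gap, since it presupposes those almost periodic solutions and a squeezing statement that is never derived.

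The paper avoids this obstruction by a different mechanism, which is the idea your proposal is missing: it never takes limits of time-translates of a nonlinear solution. Instead it runs a Korman-type monotone iteration in which each iterate is, by construction, almost periodic: one solves the \emph{linear} nonhomogeneous equations $\dot{x}_{{\tt n}+1}+Lx_{{\tt n}+1}=Lx_{\tt n}+{\rm f}(t,x_{\tt n},y_{\tt n})$, $\dot{y}_{{\tt n}+1}+Ly_{{\tt n}+1}=Ly_{\tt n}+{\rm g}(t,x_{\tt n},y_{\tt n})$, whose unique almost periodic solutions exist by the standard theory (Corduneanu), with $L>0$ chosen so large that $L+\partial_{x_k}{\rm f}_i>0$ and $L+\partial_{y_l}{\rm g}_j>0$ on the order interval; the \emph{intraspecific} monotonicity (signs on every entry of the Jacobian, diagonal included, absorbed by $L$) makes the iteration monotone coordinatewise, so the sequence starting from the sub-solution increases, the one from the super-solution decreases, and the uniform limits are almost periodic solutions $({x}^\star,{y}^\star)\preceq({x}_\star,{y}_\star)$ satisfying \eqref{eqn:periodic}. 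Part II is then proved by comparing an arbitrary interior solution with these same iterates via Gronwall estimates (the paper's Claims A--C), which yields the dichotomy \eqref{eq:opcion1}/\eqref{eq:opcion2} and, under uniqueness, \eqref{eq:opcion3}. If you want to salvage your compactness-plus-translates route you would need an additional quantitative ingredient (e.g.\ a uniform separation/contraction estimate on the order interval) that neither Lemma~\ref{lma:Comp-preserving} nor Arzel\`a--Ascoli provides; as written, the proposal does not prove the theorem.
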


\begin{remark} Notice that in the conclusion of the Main Theorem \ref{teo:Korman}
\[
	(\check{x}(t), \check{y}(t))
	\preceq ({x}^\star(t) ,{y}^\star(t))
	\preceq (\hat{x}(t), \hat{y}(t))
\]
where $ (\check{x}(t), \check{y}(t)) \preceq (\hat{x}(t), \hat{y}(t))$ are $K$-minimal and $K$-maximal almost periodic solutions of \eqref{eqn:periodic}, respectively, satisfying the initial condition
\[
	(\omega(0), \zeta(0))\prec\prec ({x}^\star(0) ,{y}^\star(0))
	\prec\prec(\Omega(0),Z(0)).
\]
Indeed, the non-empty set of almost periodic orbits $(x(t),y(t))$ such that 
\begin{equation}\label{eq:condicion-inicial-x}
	(\omega(0),\zeta(0))
	\preceq
	(x(0),y(0))
	\preceq
	(\Omega(0),Z(0)),
\end{equation}
is partially ordered by $\preceq$. Therefore, we can consider $(\check{x}(t),\check{y}(t)),$ $(\hat{x}(t),\hat{y}(t)),$ the minimal and maximal almost periodic solutions, respectively.
\end{remark}

\begin{corollary}
If the same conditions as in Theorem~\ref{teo:Korman} hold true. Assuming also that there exists just one almost periodic solution $({x}(t),{y}(t))$ with initial condition \eqref{eq:condicion-inicial-x}. If $(\omega(t), \zeta(t))\equiv(\omega_0,\zeta_0)$ and $(\Omega(t), Z(t))\equiv(\Omega_0,Z_0)$ consists of constant sub-super-solution pair with respect to the $K$-ordering. Then any solution of \eqref{Comp}, with initial condition $({\rm x}(0), {\rm y}(0))$ inside the rectangle 
\[
	(\omega_0,\zeta_0) \prec\prec
	({\rm x}(0),{\rm y}(0))
	\prec\prec (\Omega_0,Z_0),
\]
converges asymptotically as in \eqref{eq:opcion3} to the unique almost periodic solution, $(x(t),y(t))$ contained in such rectangle
\[
	(\omega_0,\zeta_0) \prec\prec
	({x}(t),{y}(t))
	\prec\prec (\Omega_0,Z_0).
\]
\end{corollary}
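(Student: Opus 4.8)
The plan is to deduce the corollary as a direct specialization of Theorem~\ref{teo:Korman}, the only genuinely new point being the \emph{strict} localization of the attractor inside the rectangle. First I would observe that the constant maps $(\omega(t),\zeta(t))\equiv(\omega_0,\zeta_0)$ and $(\Omega(t),Z(t))\equiv(\Omega_0,Z_0)$ are almost periodic — they are trigonometric polynomials with empty frequency set — and that by hypothesis they form an ordered $K$-sub-super-solution pair with $(\omega_0,\zeta_0)\prec\prec(\Omega_0,Z_0)$. Hence all the structural hypotheses required to invoke Theorem~\ref{teo:Korman} are in force, with this constant pair playing the role of $(\omega,\zeta)\prec\prec(\Omega,Z)$.

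Next I would apply Part~I of Theorem~\ref{teo:Korman} to produce an almost periodic solution $(x^\star(t),y^\star(t))$ of \eqref{Comp} satisfying $(\omega_0,\zeta_0)\preceq(x^\star(t),y^\star(t))\preceq(\Omega_0,Z_0)$ for all $t\geq 0$; in particular its initial value obeys \eqref{eq:condicion-inicial-x}. By the standing uniqueness assumption — there is only one almost periodic solution with initial condition in the closed rectangle \eqref{eq:condicion-inicial-x} — this solution coincides with $(x(t),y(t))$. The final clause of Theorem~\ref{teo:Korman} then applies verbatim: any solution $({\rm x}(t),{\rm y}(t))$ of \eqref{Comp} whose initial datum satisfies $(\omega_0,\zeta_0)\prec\prec({\rm x}(0),{\rm y}(0))\prec\prec(\Omega_0,Z_0)$, i.e. condition \eqref{eq:condicion-inicial}, converges asymptotically to $(x(t),y(t))$ in the sense of \eqref{eq:opcion3}. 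This already gives the stated convergence.

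It remains to check the localization $(\omega_0,\zeta_0)\prec\prec(x(t),y(t))\prec\prec(\Omega_0,Z_0)$ for all $t$. The closed version $(\omega_0,\zeta_0)\preceq(x(t),y(t))\preceq(\Omega_0,Z_0)$ is immediate from \eqref{eqn:periodic}. To upgrade to $\prec\prec$ I would argue as follows: take any genuine solution $(\bar{\rm x}(t),\bar{\rm y}(t))$ of \eqref{Comp} starting strictly inside the rectangle; since the constant sub- and super-solutions stay strictly below, respectively strictly above, it, Lemma~\ref{lma:Comp-preserving} keeps $(\bar{\rm x}(t),\bar{\rm y}(t))$ strictly inside the rectangle for every $t\geq 0$, while by the previous paragraph $\|(\bar{\rm x}(t),\bar{\rm y}(t))-(x(t),y(t))\|\to 0$. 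Choosing $\tau_n\to\infty$ along which the translates of the almost periodic function $(x(t),y(t))$ converge (precompactness of its hull), the corresponding translates of $(\bar{\rm x},\bar{\rm y})$ converge to the same limit, which is again an almost periodic solution of \eqref{Comp} lying in the closed rectangle, hence equal to $(x(t),y(t))$ by uniqueness; applying Lemma~\ref{lma:Comp-preserving} once more to this limiting solution (now with the constant sub-super-solution pair) promotes the closed inequality to the strict one. The step I expect to be the main obstacle is precisely this promotion from $\preceq$ to $\prec\prec$, because a limit of points of an open set lands a priori only in its closure; the subtlety is genuine, since if $(\omega_0,\zeta_0)$ or $(\Omega_0,Z_0)$ happened to be an equilibrium it would itself be an almost periodic solution on the boundary. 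In concrete models such as \eqref{system}, where the vector field $V$ points strictly into $\mathsf{S}^\circ$ along the relevant faces, this degeneracy is excluded and the strict localization is automatic; in the abstract statement it is guaranteed by the "just one almost periodic solution" hypothesis, which rules out a boundary equilibrium and thus forces $(x(t),y(t))$ off the boundary of the rectangle.
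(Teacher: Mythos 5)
Your main line of argument --- constant functions are almost periodic, so the constant pair $(\omega_0,\zeta_0)\prec\prec(\Omega_0,Z_0)$ is a legitimate almost periodic sub-super-solution pair, Part~I of Theorem~\ref{teo:Korman} produces an almost periodic solution in the closed rectangle, the uniqueness hypothesis identifies it with $(x(t),y(t))$, and the final clause of the theorem gives the convergence \eqref{eq:opcion3} --- is exactly how the corollary is meant to follow; the paper states it without a separate proof, treating it as an immediate specialization of the theorem, so for the convergence claim your proposal matches the intended argument.

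The one place you go beyond the paper is the promotion of the containment from $\preceq$ to $\prec\prec$, and there your closing step does not work as written. Lemma~\ref{lma:Comp-preserving} takes strict initial separation $(\omega(0),\zeta(0))\prec\prec({\rm x}(0),{\rm y}(0))\prec\prec(\Omega(0),Z(0))$ as a \emph{hypothesis} and propagates it forward; it cannot be used to upgrade a closed inequality at time $0$ to a strict one, so ``applying Lemma~\ref{lma:Comp-preserving} once more to this limiting solution'' is circular unless you already know the almost periodic solution is strictly inside the rectangle at some time. Likewise, the hull/translate argument only returns you to the closed rectangle, as you yourself observe. The clean route is through the monotone iteration in the proof of Theorem~\ref{teo:Korman}: since $(\omega_0,\zeta_0)$ is a sub-solution but not a solution (it is constant while $F_{\rm S},F_{\rm I}$ are non-constant in the application, and more generally the strict inequality $\alpha_{1,i}\not\equiv 0$ in \eqref{eq:alpha1} holds), the first iterate already satisfies $x_{1}\succ\succ\omega_0$ by the Gronwall estimate, and then $x^\star\succeq x_1\succ\succ\omega_0$ pointwise; the same on the super-solution side. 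That is the argument that actually delivers the strict localization, and it is worth making explicit, since your heuristic remark that uniqueness ``rules out a boundary equilibrium'' is not by itself a proof --- uniqueness does not preclude the unique almost periodic solution from touching a face of the rectangle at isolated times.
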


\subsection{Proof of Existence of an almost periodic solution in Theorem \ref{teo:Korman}}

We first prove the existence of almost periodic solutions. Take $x_0=\omega$, $y_0=\zeta$. Let $x_1(t),y_1(t)$ be {\em the unique almost periodic solution} of the following non-homogeneous linear system,

\begin{equation}\label{eqn:1}
\begin{split}
\dot{x}_1+Lx_1=L x_0 + {\rm f}(t,x_0,y_0),\\
\dot{y}_1+Ly_1=L y_0 + {\rm g}(t,x_0,y_0),
\end{split}
\end{equation}
which exists for any non-homogeneous linear ODE according to ~\cite{Corduneanu}. More specificaly,
\[
	x_{1,i}(t)=e^{-Lt}\left[c_0+\int_0^t e^{Ls}\left(Lx_0(s)+\mathrm f(z,x_0(s),y_0(s))\right)\, ds\right],
	\, i=1,\dots, n,
\]
with integration constant
\[
	c_0=-\int_0^{\infty} e^{Ls}\left(Lx_0(s)+\mathrm f(z,x_0(s),y_0(s))\right)\, ds
	\geq -\int_0^{\infty} e^{Ls}(L\omega(s)+\dot{\omega}(s))\, ds.
\]

Here $L> 0$ is a scalar to be chosen such that $L + \inf_C\{\partial_{x_k} {\rm f}_i(t,x,y)\} >0 $ and $ L +\inf_C\{\partial_{y_l} {\rm g}_j(t,x,y)\}>0$ where
\[
	 C=\{(t,x,y)\,:\, x_i\in[\omega_i(t),\Omega_i(t)],y_j\in[Z_j(t),\zeta_j(t)],\,t\in\mathbb{R} \}.
\]
We also use almost periodic uniformity with respect to $(x,y)$, see~\cite{Corduneanu}.

For each coordinate index, $i=1,\dots,n$, we have
\[
	L(x_{1,i}-\omega_i)={\rm f}_i(t,\omega,\zeta)-\dot{x}_{1,i}\geq \dot{\omega}_i - \dot{x}_{1,i}.
\]
Thus,
\begin{equation}\label{eq:alpha1}{
	\frac{d}{dt}({x}_{1,i}- \omega_i) + L(x_{1,i}-\omega_i)=
	\alpha_{1,i}(t):=
	{\rm f}_i(t,\omega(t),\zeta(t))-\dot{\omega}_i(t)\geq 0.
}\end{equation}
By Gronwall's inequality
\[
	x_{1,i}(t)-\omega_i(t)\geq (x_{1,i}(0)-\omega_i(0))e^{-Lt}\geq 0,\quad \forall t\geq 0.
\]
On the other hand, for $y_1$ we have $L(y_{1,j}-\zeta_j)={\rm g}_j(t,x_0,y_0)-\dot{y}_{1,j}\leq \dot{\zeta}_j-\dot{y}_{1,j}$ or 
\[
	\frac{d}{dt}({y}_{1,j}- \zeta_j) + L(y_{1,j}-\zeta_j)=:\beta_{1,j}(t)\leq 0.
\]
By induction we define $(x_{{\tt n}+1},y_{{\tt n}+1})$ from $(x_{{\tt n}},y_{{\tt n}})$ for every ${\tt n}\in \N,$ by taking the {\em unique almost periodic solutions} of the following linear non-homogeneous equations:
\begin{equation}\label{eq:linear-systems}\begin{aligned}
\frac{d}{dt}{x}_{{\tt n}+1}+L\,x_{{\tt n}+1}
&=
L\, x_{{\tt n}}+\,{\rm f}(t,x_{\tt n},y_{\tt n}),
\\
\frac{d}{dt}y_{{\tt n}+1}+L\, y_{{\tt n}+1}
&=
L\, y_{{\tt n}}+\,{\rm g}(t,x_{\tt n},y_{\tt n}).
\end{aligned}\end{equation}
Hence,
\[\begin{aligned}
\frac{d}{dt}({x}_{{\tt n}+1}-x_{\tt n})+L(x_{{\tt n}+1}-x_{\tt n})
&=
\alpha_{\tt n},
\\
\frac{d}{dt}(y_{{\tt n}+1}-y_{\tt n})+L(y_{{\tt n}+1}-y_{\tt n})
&=
\beta_{\tt n},
\end{aligned}\]
with
\[\begin{aligned}
\alpha_{\tt n}&=
-\dot{x}_{{\tt n}}+{\rm f}(t,x_{{\tt n}-1},y_{{\tt n}-1})+{\rm r}_{\tt n},
\qquad
{\rm r}_{\tt n}=
{\rm f}(t,x_{\tt n},y_{\tt n}) -{\rm f}(t,x_{\tt n-1},y_{\tt n-1}),
\\
\beta_{\tt n}&=
-\dot{y}_{{\tt n}}+{\rm g}(t,x_{{\tt n}-1},y_{{\tt n}-1})+{\rm s}_{\tt n},
\qquad
{\rm s}_{\tt n}=
{\rm g}(t,x_{\tt n},y_{\tt n})-{\rm g}(t,x_{\tt n-1},y_{\tt n-1}).
\end{aligned}\]
Because of \eqref{CompH}, ${\rm f}_i(t,x_{\tt n},y_{\tt n})$ is monotone increasing with respect to its components $x_{{\tt n},k}$. At the same time, it is monotone decreasing with respect to the components $y_{{\tt n},l}$.
Thus, by induction hypothesis for $k=1,\dots,n;$ and $l=1,\dots,m.$
\[
	x_{{\tt n},k}\geq x_{{\tt n}-1,k},
	\qquad
	y_{{\tt n},l}\leq y_{{\tt n}-1,l}.
\]
Therefore,
\[
	{\tt r}_{{\tt n},i}
	=
	{\rm f}_i(t,x_{{\tt n}},y_{{\tt n}})-
	{\rm f}_i(t, x_{{\tt n}-1},y_{{\tt n}})
	+
	{\rm f}_i(t, x_{{\tt n}-1},y_{{\tt n}})-
	{\rm f}_i(t, x_{{\tt n}-1},y_{{\tt n}-1})
	\geq
	0, \,
	\forall i=1,\dots,\tt n.
\]
Similarly, $	{\tt s}_{{\tt n},j}\leq 0$ for $j=1,\dots,m$. Also,
\[
-\dot{x}_{{\tt n},i}+{\rm f}_i(t,x_{{\tt n}-1},y_{{\tt n}-1})\geq 0
\]
by induction hypothesis. Thus,
\[
	\frac{d}{dt}({x}_{{\tt n}+1,i}-x_{{\tt n},i})+L(x_{{\tt n}+1,i}-x_{{\tt n},i})\geq 0,
\] and
\[
	\frac{d}{dt}(y_{{\tt n}+1,i}-y_{{\tt n},i})+L(y_{{\tt n}+1,j}-y_{{\tt n},j})\leq 0.
\]
By induction we have just proven that
\[
	x_{{\tt n}+1,i}\geq x_{{\tt n},i},\qquad
	y_{{\tt n}+1,j}\leq y_{{\tt n},j},
	\qquad i=1,\dots, n;\, j=1,\dots,m.
\]
We proceed in the same manner with the super-solutions.

We thus obtain two sequences, one of them increasing while the other one is decreasing,
\begin{equation}\label{Comp-IV}
\begin{aligned}
	(\omega,\zeta)
	\preceq(x_1,y_1)\preceq\dots (x_{\tt n},y_{\tt n})\preceq
	(X_{\tt n},Y_{\tt n})\dots \preceq
	(X_1,Y_1)\preceq(\Omega,Z).
\end{aligned}
\end{equation}
If we take the uniformly convergent sequence of almost periodic functions
\[
	{x}^\star=\lim_{{\tt n}\rightarrow \infty}x_{\tt n},
	\quad
	{y}^\star=
	\lim_{{\tt n}\rightarrow \infty}y_{\tt n},
	\qquad
	{x}_\star=\lim_{{\tt n}\rightarrow \infty}X_{\tt n},
	\quad
	{y}_\star=
	\lim_{{\tt n}\rightarrow \infty}Y_{\tt n}.
\]
Then $({x}^\star(t),{y}^\star(t)),({x}_\star(t),{y}_\star(t))$ are almost periodic. Moreover, 
\[
(\omega(t),\zeta(t))\preceq
( {x}^\star(t),{y}^\star(t))\preceq
({x}_\star(t),{y}_\star(t))
\preceq  (\Omega(t),Z(t)).
\]

Furthermore, the set $\{(x_{\tt n},y_{\tt n})\}$ solve the set of linear systems \eqref{eq:linear-systems} and
by Cauchy convergence:
\begin{equation}\label{eq:uniform-convergence}
	\lim_{{\tt n}\rightarrow \infty}x_{{\tt n},i}(t)
	={x}^\star_i(t),
\qquad
	\lim_{{\tt n}\rightarrow \infty}y_{{\tt n},j}(t)
	=
	{y}^\star_{j}(t).
\end{equation}
Thus, we have vanishing limits by continuity of ${\rm f}_i,{\rm g}_i$ and 
\[
		\lim_{{\tt n}\rightarrow\infty}{\rm r}_{{\tt n},i}=0,
		\qquad
		\lim_{{\tt n}\rightarrow\infty}{\rm s}_{{\tt n},j}=0.
\]
Therefore, $({x}^\star,{y}^\star)$ is in fact an almost periodic solution of \eqref{Comp}. Similarly, the almost periodic trajectory $({x}_\star,{y}_\star)$ becomes a solution:
\[\begin{aligned}
	\dot{x}^\star={\rm f}(t,{x}^\star,{y}^\star),\qquad
	\dot{y}^\star={\rm g}(t,{x}^\star,{y}^\star),
	\\
	\dot{x}_\star={\rm f}(t,{x}_\star,{y}_\star),\qquad
	\dot{y}_\star={\rm g}(t,{x}_\star,{y}_\star).
\end{aligned}\]
We remark, that if none of the couples of the sequence $\{(x_{\tt n},y_{\tt n})\}$ is a solution of this system, then we get strict increasing and decreasing sequences:
\[\begin{aligned}
	(\omega,\zeta)
	\prec\prec(x_1,y_1)\prec\prec\dots (x_{\tt n},y_{\tt n})\prec\prec
	(X_{\tt n},Y_{\tt n})\dots \prec\prec
	(X_1,Y_1)\prec\prec(\Omega,Z),
\end{aligned}\]

This proves the existence of almost periodic solutions.


\subsection{Proof of the asymptotic limit in Theorem \ref{teo:Korman}}

We now consider any solution $({\rm x}(t),{\rm y}(t))$ of \eqref{Comp} with initial conditions as in \eqref{initial} as follows
\begin{equation}\label{Comp-b}\begin{aligned}
	\dot{\rm x}_{i}&=
	{\rm f}_i(t,{\rm x},{\rm y}),
	\quad i=1,\dots,n,
	\\
	\dot{\rm y}_{j}&={\rm g}_j(t,{\rm x},{\rm y})	
	\quad j=1,\dots,m,
	\\
	&(\omega(0),\zeta(0))
	\prec\prec ({\rm x}(0),{\rm y}(0))
	\prec \prec(\Omega(0),Z(0)).
\end{aligned}\end{equation}
Our approach to prove assertion II in Theorem \ref{teo:Korman} is to consider the following Claim A as well as its complement $\sim {\rm A}$.

\smallskip
\noindent {\bf Claim A.} {\em Consider the uniformly converging functions $(x_{\tt n},y_{\tt n})\nearrow (x^*,y^*)$ described in \eqref{Comp-IV}. Let $({\rm x},{\rm y})$ be any solution of \eqref{Comp-b}. For every increasing time sequence, $t_{\tt n}\nearrow\infty$, there exists a sufficiently large ${\tt n}\in\mathbb{N}$ such that} 
\begin{equation}\label{eqn:<<}
  (x_{\tt n}(t_{\tt n}),y_{\tt n}(t_{\tt n}))\prec\prec({\rm x}(t_{\tt n}),{\rm y}(t_{\tt n}))
  \prec\prec(X_{\tt n}(t_{\tt n}),Y_{\tt n}(t_{\tt n})).
\end{equation}

We prove explicitly that the first inequality in \eqref{eqn:<<} holds true. The proof of the other inequality can be achieved by analogy.

Let us consider the solution $(\xi_1,\eta_1)$ with initial conditions $(\omega(0),\zeta(0))$ of the system
\[
\begin{split}
	\dot{\xi}_1+L\xi_1=
		L\omega+{\rm f}(t,\omega,\zeta),\qquad \xi_1(0)=\omega(0),\\
	\dot{\eta}_1+L\eta_1=
		L\omega+{\rm g}(t,\omega,\zeta),\qquad \eta_1(0)=\zeta(0).\\
\end{split}
\]
Notice that $(\xi_1,\eta_1)$ is not necessarily the almost periodic solution $(x_1,y_1)$ described previously in \eqref{eqn:1}, although both of them solutions of the same system.

Nonetheless, we can argue that $\dot{\xi}_{1,i}+L\xi_{1,i}\geq L\omega_i+\dot{\omega}_i$ or $\dot{\xi}_{1,i}-\dot{\omega}_i+L(\xi_{1,i}-\omega_i)\geq 0 .$ Again by Gronwall's inequality
\[
	\omega_i(t)-\xi_{1,i}(t)\leq (\omega_i(0)-\xi_{1,i}(0))e^{-Lt}\leq 0
\]
or $\xi_{1,i}(t)-\omega_i(t)\geq e^{-Lt}(\xi_{1,i}(0)-\omega_i(0))=0$. Whence,
\begin{equation}\label{eq:xia}
	\xi_{1,i}(t) \geq \omega_i(t).
\end{equation}
We remark that by definition,
\begin{equation}\label{eq:edo0}
	\frac{d}{dt}(x_{1}-\xi_1)+L(x_1-\xi_1)=0
\end{equation}
Therefore, $x_{1,i}(t)-\xi_{1,i}(t)= e^{-Lt}(x_{1,i}(0)-\omega_i(0))>0$ and
\begin{equation}\label{eq:decreasing}
	x_{1,i}>\xi_{1,i}\geq \omega_i.
\end{equation}
Besides from \eqref{eq:decreasing} and \eqref{eq:xia}, we can also conclude that
\begin{equation}\label{eq:x-xi}
	x_{1,i}(t)\searrow \xi_{1,i}(t),
	\text{ as }t\rightarrow \infty.
\end{equation}

The difference ${\rm x}_i(t)-\omega_i(t)$ is monotone increasing,
\[
	\dot{\rm x}_{i}-\dot{\omega}_i\geq
	{\rm f}_i(t,{\rm x},{\rm y})
	-{\rm f}_i(t,\omega,\zeta)\geq0,
\]
Hence, the positive initial condition ${\rm x}_{i}(0)-\omega_i(0)>0$, implies that
\[
	{\rm x}_i(t)>\omega_i(t), \quad \forall t\geq 0.
\]

\smallskip 
\noindent {\bf Claim B.} {\em There exists some $T_1>0$ such for every $t\geq T_1$
the following inequality holds,}
\begin{equation}\label{eq:cadena1}
 	 x_{1,i}(t)>	{\rm x}_{i}(t)>
	\omega_i(t),\quad \forall t\geq T_1.
\end{equation}

\begin{remark}
Notice that Claim B suffices to disprove the conclusion given by Claim A. That is ${\rm B}\Rightarrow\,  \sim {\rm A}$.
\end{remark}

Assuming Claim B, let us suppose the existence of $T_1>0$ as in \eqref{eq:cadena1}. 
On the other hand
\[\begin{aligned}
	\dot{x}_{1,i}-\dot{\rm x}_i
	+L
	(x_{1,i}-{\rm x}_i)
	<
	\dot{x}_{1,i}-\dot{\rm x}_i
	+L
	(x_{1,i}-\omega_i)
	&=
	{\rm f}_i(t,\omega,\zeta)-{\rm f}_i(t,{\rm x},{\rm y})
\leq
	0,
\end{aligned}\]
implies
\[
	x_{1,i}(t)-{\rm x}_i(t)\leq
	 e^{-L(t-T_1)}(x_{1,i}(T_1)-{\rm x}_i(T_1)),
	 \qquad
	 \forall t \geq T_1.
\]
Therefore, in addition to the decay property \eqref{eq:x-xi} we also conclude that
\begin{equation}\label{eq:cadena-1b}
	x_{1,i}\searrow {\rm x}_i,\text{ as } t\rightarrow\infty.
\end{equation}

\smallskip
\noindent {\bf Claim C.} {\em For every ${\tt n}\geq 0$ there exists $T_{{\tt n}+1}>T_0=0$ such that $T_{\tt n}$ is an increasing sequence and
\begin{equation}\label{eq:cadena-nb}
 {\rm x}_i(t) \geq x_{{\tt n},i}(t) ,
 \quad\forall t\geq T_{{\tt n}+1},
\end{equation}
 while the following generalization of inequality \eqref{eq:cadena1} holds,
\begin{equation}\label{eq:cadena-n}
	x_{{\tt n}+1,i}(t) > {\rm x}_i(t) > \omega_i(t),\quad
	\forall t\geq T_{{\tt n}+1}.
\end{equation}
Remark also that
\[ {\rm x}_i(t) \geq x_{{\tt k},i}(t) \]
whenever $ t\geq T_{{\tt n}+1}\geq T_{{\tt k}+1}$ and ${\tt k}\leq {\tt n}$}.
 \smallskip

Thus we can prove the following Lemma.

\begin{lemma}\label{lma:asymptotic1}
If we assume the conditions described in Claim C, a solution $({\rm x},{\rm y})$ of system \eqref{Comp} converges asymptotically to an almost periodic solution as in \eqref{eq:opcion1}.
\end{lemma}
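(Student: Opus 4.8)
The plan is to obtain \eqref{eq:opcion1} from Claim~C, together with its order-reversed counterpart for the $y$–coordinates, by squeezing the solution $({\rm x},{\rm y})$ \emph{uniformly} against the monotone almost periodic iteration $\{(x_{\tt n},y_{\tt n})\}$ built in the proof of existence of almost periodic solutions above. Recall from \eqref{Comp-IV} that this iteration increases in the $K$–order to the almost periodic solution $(x^\star,y^\star)=\lim_{\tt n}(x_{\tt n},y_{\tt n})$ of \eqref{Comp}, and that the convergence $x_{\tt n}\to x^\star$, $y_{\tt n}\to y^\star$ holds uniformly on $\R$ (this uniform convergence of the almost periodic iterates is exactly what allowed one to pass to the limit and produce $(x^\star,y^\star)$).

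First I would record the two-sided bound that Claim~C provides. Fix ${\tt n}\in\N$. For $t\ge T_{{\tt n}+1}$, inequalities \eqref{eq:cadena-nb} and \eqref{eq:cadena-n} give $x_{{\tt n},i}(t)\le {\rm x}_i(t)<x_{{\tt n}+1,i}(t)$ for $i=1,\dots,n$, and the analogous statement, obtained along the same lines by passing to the reversed order $-K$ (as with the second inequality in Claim~A), gives $y_{{\tt n}+1,j}(t)<{\rm y}_j(t)\le y_{{\tt n},j}(t)$ for $j=1,\dots,m$. Since $\{(x_{\tt m},y_{\tt m})\}$ is $K$–monotone with limit $(x^\star,y^\star)$, one has $x_{{\tt n}+1,i}(t)\le x^\star_i(t)$ and $y_{{\tt n}+1,j}(t)\ge y^\star_j(t)$ for every $t$. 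Combining these, for all $t\ge T_{{\tt n}+1}$,
\[
0\le x^\star_i(t)-{\rm x}_i(t)\le x^\star_i(t)-x_{{\tt n},i}(t),\qquad 0\le {\rm y}_j(t)-y^\star_j(t)\le y_{{\tt n},j}(t)-y^\star_j(t),
\]
and therefore
\[
\left\|({\rm x}(t),{\rm y}(t))-(x^\star(t),y^\star(t))\right\|\le \left\|(x_{\tt n}(t),y_{\tt n}(t))-(x^\star(t),y^\star(t))\right\|,\qquad t\ge T_{{\tt n}+1}.
\]

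Then I would fix $\geps>0$ and use the uniform convergence of the iteration to choose ${\tt N}\in\N$ with $\sup_{t\in\R}\left\|(x_{\tt N}(t),y_{\tt N}(t))-(x^\star(t),y^\star(t))\right\|<\geps$. Taking $T:=T_{{\tt N}+1}$, the previous estimate yields $\left\|({\rm x}(t),{\rm y}(t))-(x^\star(t),y^\star(t))\right\|<\geps$ for every $t\ge T$. As $\geps>0$ is arbitrary, this is precisely \eqref{eq:opcion1} with the almost periodic solution $(x^\star,y^\star)$, which completes the proof.

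Once Claim~C and its order-reversed analogue are granted, this deduction is essentially mechanical; the only point requiring care is that the squeeze must be performed \emph{uniformly in $t$}, which is why one uses the sup-norm convergence of the monotone almost periodic iterates rather than mere pointwise convergence as in \eqref{eq:uniform-convergence}. It is also worth noting that, although the thresholds $T_{{\tt n}+1}$ increase with ${\tt n}$, for a prescribed $\geps$ only one index ${\tt N}$—hence a single threshold $T_{{\tt N}+1}$—enters the argument, so no diagonal extraction is needed; all the genuine work of the scheme is concentrated in Claim~C, which is part of the hypotheses of the lemma.
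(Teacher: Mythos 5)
Your argument is correct, and it rests on the same two ingredients as the paper's proof: the thresholds and order relations supplied by Claim~C, and the uniform (sup-norm) convergence of the monotone almost periodic iterates $(x_{\tt n},y_{\tt n})\to(x^\star,y^\star)$. The difference is in the middle step. The paper does not use the squeeze $x_{{\tt n},i}\le{\rm x}_i<x_{{\tt n}+1,i}\le x^\star_i$ directly; instead it exploits both inequalities of Claim~C through the intraspecific monotonicity to get the differential inequality
\[
\dot{x}_{{\tt n}+1,i}-\dot{\rm x}_i+L(x_{{\tt n}+1,i}-{\rm x}_i)\le {\rm f}_i(t,x_{\tt n},y_{\tt n})-{\rm f}_i(t,{\rm x},{\rm y})\le 0,
\]
and then Gronwall to conclude $x_{{\tt n}+1,i}(t)-{\rm x}_i(t)\le e^{-L(t-T_{{\tt n}+1})}\bigl(x_{{\tt n}+1,i}(T_{{\tt n}+1})-{\rm x}_i(T_{{\tt n}+1})\bigr)$, i.e.\ $x_{{\tt n}+1,i}\searrow{\rm x}_i$ as $t\to\infty$, before letting ${\tt n}\to\infty$ via \eqref{eq:uniform-convergence}. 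Your purely order-theoretic squeeze bypasses this Gronwall step entirely: once Claim~C pins ${\rm x}_i$ between $x_{{\tt n},i}$ and $x_{{\tt n}+1,i}$ for $t\ge T_{{\tt n}+1}$, the uniform convergence of the iterates already yields \eqref{eq:opcion1}, so your route is somewhat more elementary, at the price of losing the quantitative exponential approach of $x_{{\tt n}+1,i}$ to ${\rm x}_i$ that the paper's estimate provides (mirroring \eqref{eq:cadena-1b}). Two points you handle implicitly are handled the same way in the paper, so they are not gaps: the $y$-coordinates are treated ``by analogy'' with the reversed order (as the paper does for Claim~A), and the convergence in \eqref{eq:uniform-convergence} is indeed used by the paper as uniform in $t$, which is exactly what your choice of a single index ${\tt N}$ and threshold $T_{{\tt N}+1}$ requires.
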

\begin{proof}
By induction, let us suppose the existence of $T_{{\tt n}+1}>T_{\tt n}>0$ as in \eqref{eq:cadena-n}. Hence,
\[\begin{aligned}
x_{{\tt n},i}(t)\searrow \xi_{{\tt n},i}(t).
\end{aligned}\]
On the other hand
\[\begin{aligned}
	\dot{x}_{{\tt n}+1,i}-\dot{\rm x}_i
	+L
	(x_{{\tt n}+1,i}-{\rm x}_i)
	&<
	\dot{x}_{{\tt n}+1,i}-\dot{\rm x}_i
	+L
	(x_{{\tt n}+1,i}-x_{{\tt n},i})
	\\
	&=
	{\rm f}_i(t,x_{\tt n},y_{\tt n})-{\rm f}_i(t,{\rm x},{\rm y})
\leq
	0,
\end{aligned}\]
implies
\[
	x_{{\tt n}+1,i}(t)-{\rm x}_i(t)\leq
	 e^{-L(t-T_{{\tt n}+1})}(x_{{\tt n}+1,i}(T_1)-{\rm x}_i(T_{{\tt n}+1})),
	 \qquad
	 \forall t \geq T_{{\tt n}+1}.
\]
Therefore, we also conclude that
\[
	x_{{\tt n}+1,i}\searrow  {\rm x}_i,\text{ as } t\rightarrow\infty.
\]
Moreover, from the uniform convergence \eqref{eq:uniform-convergence}, $x_{{\tt n},i}\nearrow x_i^\star,$ as ${\tt n}\rightarrow \infty$ we prove the asymptotic limit
\[
	\lim_{{\tt n}\rightarrow \infty}{\rm x}_{i}(t)
	={x}^\star_i(t).
\]
\end{proof}

\begin{remark}
We can generalize \eqref{eq:decreasing} to conclude that
\begin{equation}\label{eq:decreasing-b}
	\omega\preceq \xi_{1} \prec\prec x_{1}.
\end{equation}
The uniform convergence as ${\tt n}\rightarrow\infty$ to solutions $(x_{\tt n},y_{\tt n})\rightarrow (x^\star,y^\star)$  and
$
	(\xi_{\tt n},\eta_{\tt n})\rightarrow (\xi^\star,\eta^\star),
$ allows to conclude that both solutions are comparable, i.e.
\begin{equation}\label{eq:cadena-n-c}
	(x^\star,y^\star)\succeq (\xi^\star,\eta^\star)
	\succ(\omega,\zeta).
\end{equation}

\end{remark}

\begin{remark}
Notice that Claim {C} $\Rightarrow$ Claim {B}. It is easy to check also that Claim C $\Rightarrow \,\sim ({\rm Claim\, A})$. Furthermore,
$\sim ({\rm Claim \,A})\Leftrightarrow {\rm Claim\,  C}.$
\end{remark}

\subsection{Conclusion of the proof of Theorem~\ref{teo:Korman}}

We now recall that \eqref{Comp-IV} states that $(x_{\tt n},y_{\tt n})$ is an increasing sequence in the partial ordering $\preceq$ which converges towards an almost periodic solution $({x}^\star,{y}^\star)$ of \eqref{Comp}.

From Lemma \ref{lma:asymptotic1} we have that Claim C implies the asymptotic limit,
\[
	({\rm x}(t),{\rm y}(t))\rightarrow ({x}^\star(t),{y}^\star(t)),\quad
	t\rightarrow\infty.
\]
Thus, conclusion II(a) in Theorem \ref{teo:Korman} arises.

On the contrary, if we assume Claim A (which happens to be the opposite of Claim C), then for every sequence $t_{\tt n}\nearrow\infty$ there exists ${\tt n}\in\mathbb N$ sufficiently large, such that
\[
	{\rm x}_{i}(t_{\tt n}) > {x}_{{\tt n},i}(t_{\tt n}).
\]
A similar property can be deduced for the other coordinates
\[
	{\rm y}_{i}(t_{\tt n}) < {y}_{{\tt n},i}(t_{\tt n}).
\]
From uniform convergence, for every $\geps>0$ there exists $N_\geps$ such that
\[
	0\leq x_i^\star(t)-x_{{\tt n},i}(t)<\geps,
	\forall t\geq 0,\quad {\tt n}\geq N_\geps
\]
So for every ${\tt n}\geq N_{\geps}$ sufficiently large,
\[
	{\rm x}_{i}(t_{\tt n})-x_i^\star(t_{\tt n})
	=
		\left[{\rm x}_{i}(t_{\tt n})-{x}_{{\tt n},i}(t_{\tt n})\right]
		-\left[x_i^\star(t_{\tt n})-{x}_{{\tt n},i}(t_{\tt n})\right]
		>
		0-\geps.
\]
Therefore, $\liminf_{t\rightarrow\infty}\left[{\rm x}_{i}(t)-x_i^\star(t)\right]\geq 0,$ or
\begin{equation}\label{eq:liminf}
	\liminf_{t\rightarrow\infty}
	{\rm x}_{i}(t)\geq
	\limsup_{t\rightarrow\infty} x_i^\star(t).
\end{equation}
Recall that by monotonicity, the pair of solutions $(\xi^\star,\eta^\star)$ and $(x^\star,y^\star)$ are comparable with respect to $(\preceq)$ as in \eqref{eq:cadena-n-c} and that by the initial condition \eqref{Comp-b},
$
	 ({\rm x}, {\rm y})\succ (\xi^\star,\eta^\star)
$. On the other hand \eqref{eq:liminf} allows us to conclude that there exists $t_{\tt n}>0$ such that $(x^\star(t_{\tt n}),y^\star(t_{\tt n}))\succeq({\rm x}(t_{\tt n}),{\rm y}(t_{\tt n}))$ Regarding these values in $T=t_{\tt n}$ as initial conditions, by monotonicity we can conclude the following comparison 
\[
	({\rm x}(t),{\rm y}(t))\succeq(x^\star(t),y^\star(t)),
	\quad \forall t\geq  T.
\]
This ends the proof of the case II(b) in Theorem~\ref{teo:Korman}.

Summarizing our results, we have proved the following assertion

\begin{lemma}
$Z^\star\in {\sf Super}\, V$ defines a super-solution $(\vec{0},Z^\star)$ while 
for $\omega^0\in{\sf sub}\, V $ we obtain a sub-solution $(\omega^0,\vec{0})$. Both are related so that we get a pair sub-super-solution.
\end{lemma}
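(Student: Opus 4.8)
The plan is to read the pair off system \eqref{system} by evaluating its field $V=(V_{\rm S},V_{\rm I},V_{\rm ES},V_{\rm EI})$ at two constant states, with the four variables split into a first block $(c_{\rm S},c_{\rm I})$ and a second block $(c_{\rm ES},c_{\rm EI})$, and the order being the reversed one $-K=\R^{2}_{\le 0}\times\R^{2}_{\ge 0}$; requiring a pair in the stated shape $(\omega^0,\vec 0)\preceq(\vec 0,Z^\star)$ (with $\omega^0,Z^\star\ge\vec 0$) to be ordered forces exactly this orthant, and it is the order in which the intraspecific monotone comparison of Section \ref{sec:AP} is meant to be run. With this convention ${\sf sub}\,V$ and ${\sf Super}\,V$ are just the constant vectors $\omega^0\ge\vec 0$, resp. $Z^\star\ge\vec 0$, for which $(\omega^0,\vec 0)$, resp. $(\vec 0,Z^\star)$, verifies \eqref{Sub-bis}, resp. \eqref{Sup-bis}; so the content of the lemma is that both sets are nonempty and that the two states can be chosen to interlace every orbit issuing from $\mathsf S^\circ$.

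First I would check $(\omega^0,\vec 0)=(\omega^0_{\rm S},\omega^0_{\rm I},0,0)$. Since $\dot\omega^0\equiv\vec 0$, the inequalities \eqref{Sub-bis} collapse to $V_{\rm S}(\omega^0,\vec 0)\le 0$, $V_{\rm I}(\omega^0,\vec 0)\le 0$ on the first block and $V_{\rm ES}(\omega^0,\vec 0)\ge 0$, $V_{\rm EI}(\omega^0,\vec 0)\ge 0$ on the second. Evaluating, $V_{\rm ES}=k_1T\,\omega^0_{\rm S}$ and $V_{\rm EI}=k_5T\,\omega^0_{\rm I}$ are nonnegative for free, whereas $V_{\rm S}=F_{\rm S}(t)-(k_1T+\xi_{\rm S})\omega^0_{\rm S}$ and $V_{\rm I}=F_{\rm I}(t)-(k_5T+\xi_{\rm I})\omega^0_{\rm I}$ are nonpositive as soon as $\omega^0_{\rm S}\ge F_{\rm S}^{\ast}/(k_1T+\xi_{\rm S})$ and $\omega^0_{\rm I}\ge F_{\rm I}^{\ast}/(k_5T+\xi_{\rm I})$, which is admissible because almost periodic supplies are bounded. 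The super-solution $(\vec 0,Z^\star)=(0,0,Z^\star_{\rm ES},Z^\star_{\rm EI})$ is even quicker: setting $c_{\rm S}=c_{\rm I}=0$ annihilates every monomial of $V$ carrying a factor $c_{\rm S}$ or $c_{\rm I}$, in particular the two sign indefinite nonlinearities of $\dot c_{\rm ES},\dot c_{\rm EI}$, so \eqref{Sup-bis} reduces to $k_2Z^\star_{\rm ES}+F_{\rm S}(t)\ge 0$, $k_4Z^\star_{\rm EI}+F_{\rm I}(t)\ge 0$, $-(k_2+k_3)Z^\star_{\rm ES}\le 0$, $-k_4Z^\star_{\rm EI}\le 0$, all valid for every $Z^\star\ge\vec 0$; concretely one takes $Z^\star_{\rm ES}=Z^\star_{\rm EI}=T$, or marginally larger if strict inequalities are wanted.

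It remains to order the pair and to interlace the data. In the $-K$ order $(\omega^0,\vec 0)\preceq(\vec 0,Z^\star)$ is precisely $\omega^0\ge\vec 0$ together with $Z^\star\ge\vec 0$, already secured; and any orbit starting in $\mathsf S^\circ$ satisfies $0<c_{\rm ES}(0),c_{\rm EI}(0)<T$, while enlarging $\omega^0_{\rm S},\omega^0_{\rm I}$ (which leaves the sub-solution inequalities above intact) also gives $0<c_{\rm S}(0)<\omega^0_{\rm S}$, $0<c_{\rm I}(0)<\omega^0_{\rm I}$, so the strict interlacing \eqref{initial2} holds and Theorem \ref{teo:Korman} can be invoked. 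The step I expect to require the most care is not this bookkeeping but the compatibility with monotonicity: one must ensure the whole interval cut out by the pair stays inside the region on which the entries of $DV$ keep the signs demanded by \eqref{CompH} for $-K$, the delicate locus being where $c_{\rm ES}+c_{\rm EI}$ approaches $T$ and $c_{\rm E}$ changes sign. This is exactly what dictated the particular block splitting and the reversed order, and it is inherited from the structural analysis of $V$ rather than re-derived here.
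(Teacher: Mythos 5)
Your verification of the two families of differential inequalities is correct and coincides with the paper's computation: for the constant state $(\omega^0,\vec 0)$ the only binding conditions are $F_{\rm S}(t)-(k_1T+\xi_{\rm S})\omega^0_{\rm S}\leq 0$ and $F_{\rm I}(t)-(k_5T+\xi_{\rm I})\omega^0_{\rm I}\leq 0$, giving exactly the thresholds $\omega^0_{\rm S}\geq (F_{\rm S})^*/(\xi_{\rm S}+k_1T)$, $\omega^0_{\rm I}\geq (F_{\rm I})^*/(\xi_{\rm I}+k_5T)$, and for $(\vec 0,Z)$ every inequality holds automatically once $Z\geq\vec 0$; the $(-K)$-ordering of the pair is also the one used in the paper. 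Up to that point your argument and the paper's are the same.

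The genuine gap is your concrete choice $Z^\star_{\rm ES}=Z^\star_{\rm EI}=T$ (``or marginally larger''). This point does not belong to ${\sf Super}\,V$, which the paper cuts out by the extra constraint \eqref{Z-leq-1}, $Z_{\rm ES}+Z_{\rm EI}\leq T$. That constraint is not needed for the super-solution inequalities (as you correctly observe, they hold for all $Z\geq\vec 0$); its purpose is to keep the whole order interval spanned by $(\omega^0,\vec 0)$ and $(\vec 0,Z^\star)$ inside the region $c_{\rm ES}+c_{\rm EI}\leq T$ of \eqref{eq:negative}, where the Jacobian entries $k_1(T-c_{\rm ES}-c_{\rm EI})$ and $k_5(T-c_{\rm ES}-c_{\rm EI})$ have the signs \eqref{DF^T-bis-Intra} required for the $(-K)$-intraspecific monotone comparison and iteration scheme (Lemma \ref{lma:Comp-preserving}, Theorem \ref{teo:Korman}). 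With $Z^\star=(T,T)$ the box contains points with $c_{\rm ES}+c_{\rm EI}$ as large as $2T$, where those entries become negative and the monotone machinery you invoke is no longer applicable; you even identify this as ``the step requiring the most care'' and then defer it, while your choice of $Z^\star$ is precisely what violates it. The paper avoids this by taking $Z^\star=(T/2,T/2)$, so the sum stays $\leq T$ throughout the parallelepiped, and it does \emph{not} attempt to bracket every initial condition of $\mathsf S^\circ$ by enlarging the pair (which cannot be done in the $c_{\rm ES},c_{\rm EI}$ directions without leaving the monotonicity region): global attraction of arbitrary data is obtained afterwards from the separate lemmas showing that the region $U$ with $Z_{\rm ES},Z_{\rm EI}\leq T/2$ is attracting and positively invariant. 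To repair your proposal, choose $Z^\star$ with $Z^\star_{\rm ES}+Z^\star_{\rm EI}\leq T$ and drop the claim that the sub-super pair interlaces all orbits issuing from $\mathsf S^\circ$; the lemma only asserts the existence of an ordered pair.
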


\section{Proof of Theorem~\ref{teo:principal}}
 
 \subsection{Existence of almost periodic solutions in Theorem~\ref{teo:principal}}
 
As an application of the results exposed in the previous Section we prove Theorem~\ref{teo:principal}. Firstly, we verify the intraspecific motonicity property of system \eqref{system} which we denote by
\[
	V=\left(\begin{array}{c}
	\dot{c}_S\\ \dot{c}_I\\ \dot{c}_{\rm ES}\\ \dot{c}_{\rm EI}
	\end{array}\right).
\]
By calculating the jacobian matrix $DV$, we get
\[
	\left(\begin{array}{ccccc}
		-k_1(T-c_{\rm EI}-c_{\rm ES})-\xi_{\rm S}		&	0		& k_1c_{\rm S}+k_2	& k_1c_{\rm S}
		\\
		0				&	-k_5(T-c_{\rm ES} -c_{\rm EI})-\xi_{\rm I}	 & k_5c_{\rm I}		& k_4+k_5c_{\rm I}
		\\
		k_1(T-c_{\rm ES}-c_{\rm EI})  &0	    	& -k_1c_{\rm S} -k_2-k_3		&-k_1c_{\rm S}
		\\
		0	& k_5(T-c_{\rm ES}-c_{\rm EI})	&  -k_5c_{\rm I}  	& -k_4-k_5c_{\rm I}
\end{array}\right).
\]
It has the form of a monotone system described in \eqref{DF^T-bis}. Furthermore $DV$ has the form of an {\em intraspecific} and monotone system since
\begin{equation}\label{eq:negative}
	T-c_{\rm EI}-c_{\rm ES}	\geq 0,\qquad
\end{equation}
hold true for the dynamics inside the stoichiometric space $c\in\mathsf S\cap \mathrm R^4_{\geq 0}$. We adopt the $(-K)$-partial ordering with orthant
\[
	-K=\R^2_{\leq 0}\times\R_{\geq 0}^2,
\]
to apply our results. We define a sub-solution whose constant components,
\[
	\left(\begin{array}{c}
	\omega
	\\
	\zeta
	\end{array}\right)
	=
	\left(\begin{array}{c}
	\omega_{\rm S}\\ \omega_{\rm I} \\ \zeta_{\rm ES}
	\\ \zeta_{\rm EI}
	\end{array}\right)
\]
solve inequalities \eqref{Sup-bis}, i.e.
\[\begin{aligned}
	0
	&\geq
	(F_{\rm S})^*-k_1\omega_{\rm S}(T-\zeta_{\rm ES}-\zeta_{\rm EI})-\xi_{\rm S}\omega_{\rm S}+k_2\zeta_{\rm ES},
	\\
	0
	&\geq
	(F_{\rm I})^*-k_5\omega_{\rm I}(T- \zeta_{\rm EI}-\zeta_{\rm ES})-\xi_{\rm I}\omega_{\rm I}+k_4\zeta_{\rm EI},
	\\
	0
	&\leq
	k_1\omega_{\rm S}(T-\zeta_{\rm EI}-\zeta_{\rm ES}) - (k_2+ k_3 )\zeta_{\rm ES} ,
	\\
	0
	&\leq
	k_5\omega_{\rm I}(T-\zeta_{\rm ES}-\zeta_{\rm EI}) -k_4\zeta_{\rm EI}.
\end{aligned}\]
Solving the inequalities by imposing $\zeta=\vec{0}$, see Fig. \ref{fig:R4}, we get,
\[	
	\omega_{\rm S}\geq\omega_{\rm S}^0
	:=
	\frac{(F_{\rm S})^*}{\xi_{\rm S}+k_1T}>0,
	\quad
	\omega_{\rm I}
	\geq 
	\omega_{\rm I}^0
	:=\frac{(F_{\rm I})^*}{\xi_{\rm I}+k_5T}>0,
\quad
	\zeta_{\rm ES}=0,
	\quad
	\zeta_{\rm EI}=0.
\]
Therefore, there exists an unbounded parameter space,
$
	(u,v)\in\R^2_{\geq 0},
$
such that any $\left(\omega,\vec{0}\right)\in\R^2_{\geq 0}\times \R^2_{\geq 0},$ given by
\[
	\omega_{\rm S}=u+\omega^0_{\rm S},
	\quad
	\omega_{\rm I}=v+ \omega^0_{\rm I},
	\qquad
	u\geq 0,\, 
	v\geq 0,
\]
defines a sub-solution $(\omega,\vec{0})$ in the region ${\sf sub}\, V\subset \R_{\geq 0}^2$, see Fig. \ref{fig:sub-Super}. A vertex for ${\sf sub}\, V$ is
\[
	\omega^0=
	\left(\begin{array}{c}
		\omega^0_S\\ \omega^0_I
	\end{array}\right).
\]

\begin{figure}[t] %
 \centering
 

\setlength{\unitlength}{2500sp}%
\begingroup\makeatletter\ifx\SetFigFont\undefined%
\gdef\SetFigFont#1#2#3#4#5{%
  \reset@font\fontsize{#1}{#2pt}%
  \fontfamily{#3}\fontseries{#4}\fontshape{#5}%
  \selectfont}%
\fi\endgroup%
\begin{picture}(5877,5049)(1636,-7573)
\put(6076,-5461){\makebox(0,0)[lb]{\smash{{\SetFigFont{12}{14.4}{\rmdefault}{\mddefault}{\updefault}{\color[rgb]{0,0,0}$\mathsf{S}^\circ$}%
}}}}
\thinlines
{\color[rgb]{0,0,0}\put(1801,-6961){\vector( 1, 0){5700}}
}%
{\color[rgb]{0,0,0}\put(5401,-6961){\vector(-1, 1){2925}}
}%
{\color[rgb]{0,0,0}\put(2101,-3361){\line( 1, 0){5400}}
}%
\put(7276,-7261){\makebox(0,0)[lb]{\smash{{\SetFigFont{12}{14.4}{\familydefault}{\mddefault}{\updefault}{\color[rgb]{0,0,0}$(c_{\rm S},c_{\rm I})$}%
}}}}
\put(1651,-2836){\makebox(0,0)[lb]{\smash{{\SetFigFont{12}{14.4}{\familydefault}{\mddefault}{\updefault}{\color[rgb]{0,0,0}$(c_{\rm ES}, c_{\rm EI})$}%
}}}}
\put(2701,-4111){\makebox(0,0)[lb]{\smash{{\SetFigFont{12}{14.4}{\familydefault}{\mddefault}{\updefault}{\color[rgb]{0,0,0}$({\rm 0},Z)$}%
}}}}
\put(6076,-2986){\makebox(0,0)[lb]{\smash{{\SetFigFont{12}{14.4}{\familydefault}{\mddefault}{\updefault}{\color[rgb]{0,0,0}$\R^4_{\geq0}$}%
}}}}
\put(4576,-3661){\makebox(0,0)[lb]{\smash{{\SetFigFont{12}{14.4}{\rmdefault}{\mddefault}{\updefault}{\color[rgb]{0,0,0}$c_{\rm ES}+c_{\rm EI}=T$}%
}}}}
\put(4726,-7336){\makebox(0,0)[lb]{\smash{{\SetFigFont{12}{14.4}{\familydefault}{\mddefault}{\updefault}{\color[rgb]{0,0,0}$(\omega,\vec{0})$}%
}}}}
{\color[rgb]{0,0,0}\put(2401,-7561){\vector( 0, 1){5025}}
}%
\end{picture}%

   \caption{Sub-solution $(\omega,\vec{0})$ and super-solution $(\vec{0},Z)$ in the phase space $\mathbb R^4_{\geq 0}$.}
   \label{fig:R4}
\end{figure}
For a super-solution we take a constant vector
\[
	\left(\begin{array}{c}
	\Omega
	\\
	Z
	\end{array}\right)=
	\left(\begin{array}{c}
	\Omega_S\\ \Omega_I \\ Z_{ES} \\ Z_{EI}
	\end{array}\right)\in \mathbb R^2_{\geq 0},
\]
solving \eqref{Sub-bis}, i.e.
\[\begin{aligned}
	0
	&\leq
	(F_{\rm S})_*-k_1\Omega_{\rm S}(T-Z_{\rm ES}-Z_{\rm EI})
	-\xi_{\rm S}\Omega_{\rm S}+k_2 Z_{\rm ES},
	\\
	0
	&\leq
	(F_{\rm I})_*-k_5(T-Z_{\rm ES}-Z_{\rm EI})
	-\xi_{\rm I}\Omega_I+k_4Z_{\rm EI},
	\\
	0
	&\geq
	k_1\Omega_S(T-Z_{\rm EI}-Z_{\rm ES})
	-(k_2+k_3)Z_{\rm ES},
	\\
	0
	&\geq
	k_5\Omega_I(T-Z_{\rm EI}-Z_{\rm ES})
	-k_4Z_{\rm EI} .
\end{aligned}\]
We consider $\Omega=(\Omega_{\rm S},\Omega_{\rm I})=\vec{0}$ and the region consisting of constant super-solutions $\left(\vec{0},Z\right)\in \R^2_{\geq 0}\times\R^2_{\geq 0}$ where  $Z=(Z_{\rm ES},Z_{\rm EI})\subset \R^2_{\geq 0}$ lies within a triangle ${\sf Super}\, V\subset \R^2_{\geq0}$ limited by the lines
$
	Z_{\rm ES}=Z_{\rm ES}^0=0,\quad
	Z_{\rm EI}=Z_{\rm EI}^0=0, \quad
$
and solving the equality
\begin{equation}\label{Z-leq-1}
	0\leq Z_{\rm ES}+Z_{\rm EI}\leq T.
\end{equation}

Now we can apply Theorem \ref{teo:Korman}, by considering sub-super-solution pairs with constant coordinates
$
	\left(\omega^0,\vec{0}\right),
	\quad
	\left(\vec{0},Z^\star\right),
$ with
\[
	Z^\star=(T/2,T/2),
\]
Remarkably, $Z^\star$ is the maximum for a rectangular region $R$ (that is, $Z^\star\succeq_{\rm i} Z$ for every $Z\in R$ using the usual first quadrant, denoted as ${\rm i}$, partial ordering in $\mathbb R^2$). We will consider the following rectangle
\[
	R
	=\{(Z_{ES},Z_{EI})\in\R^2_{\geq 0}\,
	\, Z_{ES}\leq T/2,\,Z_{EI}\leq T/2
	\}
	\subset {\sf Super}\, V
\]has opposed vertices $\vec{0}\in R$ and $Z^\star\in R$. See Fig. 3.

We conclude the proof of existence of at least one 
almost periodic solution inside the parallelepiped in $\R^4$ spanned by vertices $(\vec{0},Z^\star)$ and $(\omega^0,\vec{0})$.

\begin{figure}[t] 
   \centering
\setlength{\unitlength}{2500sp}%
\begingroup\makeatletter\ifx\SetFigFont\undefined%
\gdef\SetFigFont#1#2#3#4#5{%
  \reset@font\fontsize{#1}{#2pt}%
  \fontfamily{#3}\fontseries{#4}\fontshape{#5}%
  \selectfont}%
\fi\endgroup%
\begin{picture}(9777,4299)(2461,-7723)
\put(10426,-4936){\makebox(0,0)[lb]{\smash{{\SetFigFont{12}{14.4}{\rmdefault}{\mddefault}{\updefault}{\color[rgb]{0,0,0}$\omega^\star$}%
}}}}
\thinlines
{\color[rgb]{0,0,0}\put(8701,-7711){\vector( 0, 1){4200}}
}%
{\color[rgb]{0,0,0}\put(9601,-6961){\line( 0, 1){3300}}
}%
{\color[rgb]{0,0,0}\put(8701,-5761){\line( 1, 0){3300}}
}%
{\color[rgb]{0,0,0}\put(3001,-7636){\vector( 0, 1){4200}}
}%
{\color[rgb]{0,0,0}\put(2476,-6961){\vector( 1, 0){4500}}
}%
{\color[rgb]{0,0,0}\put(3001,-5761){\line( 1, 0){3300}}
}%
{\color[rgb]{0,0,0}\put(3901,-6961){\line( 0, 1){3300}}
}%
{\color[rgb]{0,0,0}\multiput(10501,-7336)(0.00000,117.56757){19}{\line( 0, 1){ 58.784}}
\multiput(10501,-5161)(-117.07317,0.00000){21}{\line(-1, 0){ 58.537}}
}%
\put(9751,-6136){\makebox(0,0)[lb]{\smash{{\SetFigFont{12}{14.4}{\rmdefault}{\mddefault}{\updefault}{\color[rgb]{0,0,0}$\omega^0$}%
}}}}
\put(11701,-7411){\makebox(0,0)[lb]{\smash{{\SetFigFont{12}{14.4}{\rmdefault}{\mddefault}{\updefault}{\color[rgb]{0,0,0}$\omega_{\rm S}$}%
}}}}
\put(8101,-3811){\makebox(0,0)[lb]{\smash{{\SetFigFont{12}{14.4}{\rmdefault}{\mddefault}{\updefault}{\color[rgb]{0,0,0}$\omega_{\rm I}$}%
}}}}
\put(4051,-6136){\makebox(0,0)[lb]{\smash{{\SetFigFont{12}{14.4}{\rmdefault}{\mddefault}{\updefault}{\color[rgb]{0,0,0}$\omega^0$}%
}}}}
\put(2476,-3736){\makebox(0,0)[lb]{\smash{{\SetFigFont{12}{14.4}{\rmdefault}{\mddefault}{\updefault}{\color[rgb]{0,0,0}$\omega_{\rm I}$}%
}}}}
\put(6151,-7336){\makebox(0,0)[lb]{\smash{{\SetFigFont{12}{14.4}{\rmdefault}{\mddefault}{\updefault}{\color[rgb]{0,0,0}$\omega_{\rm S}$}%
}}}}
\put(4801,-4561){\makebox(0,0)[lb]{\smash{{\SetFigFont{12}{14.4}{\rmdefault}{\mddefault}{\updefault}{\color[rgb]{0,0,0}${\sf sub}\, V$}%
}}}}
{\color[rgb]{0,0,0}\put(7726,-6961){\vector( 1, 0){4500}}
}%
\end{picture}%
   \caption{A set of sub-solutions, ${\sf sub}\,V\subset \R^2_{\geq 0}$. It has vertex $\omega^0=(\omega^0_{\rm S},\omega_{\rm I}^0)$. A suitable sub-solution $\omega^\star\in {\sf sub}\, V$}.
   \label{fig:sub-Super}
\end{figure}

\begin{figure}
\setlength{\unitlength}{2500sp}%
\begingroup\makeatletter\ifx\SetFigFont\undefined%
\gdef\SetFigFont#1#2#3#4#5{%
  \reset@font\fontsize{#1}{#2pt}%
  \fontfamily{#3}\fontseries{#4}\fontshape{#5}%
  \selectfont}%
\fi\endgroup%
\begin{picture}(11352,5124)(661,-7498)
\put(8776,-5686){\makebox(0,0)[lb]{\smash{{\SetFigFont{12}{14.4}{\rmdefault}{\mddefault}{\updefault}{\color[rgb]{0,0,0}$R$}%
}}}}
\thinlines
{\color[rgb]{0,0,0}\put(8101,-6361){\vector( 1, 0){3900}}
}%
{\color[rgb]{0,0,0}\put(1126,-6361){\vector( 1, 0){5475}}
}%
{\color[rgb]{0,0,0}\put(2401,-7486){\vector( 0, 1){5100}}
}%
{\color[rgb]{0,0,0}\put(8101,-3361){\line( 1,-1){3000}}
}%
{\color[rgb]{0,0,0}\put(2101,-3061){\line( 1,-1){3900}}
}%
{\color[rgb]{0,0,0}\put(8101,-4861){\line( 1, 0){1500}}
\put(9601,-4861){\line( 0,-1){1500}}
}%
\put(676,-2761){\makebox(0,0)[lb]{\smash{{\SetFigFont{12}{14.4}{\rmdefault}{\mddefault}{\updefault}{\color[rgb]{0,0,0}}%
}}}}
\put(11401,-5986){\makebox(0,0)[lb]{\smash{{\SetFigFont{12}{14.4}{\rmdefault}{\mddefault}{\updefault}{\color[rgb]{0,0,0}$Z_{\rm ES}$}%
}}}}
\put(8401,-2686){\makebox(0,0)[lb]{\smash{{\SetFigFont{12}{14.4}{\rmdefault}{\mddefault}{\updefault}{\color[rgb]{0,0,0}$Z_{\rm EI}$}%
}}}}
\put(9601,-4861){\makebox(0,0)[lb]{\smash{{\SetFigFont{12}{14.4}{\rmdefault}{\mddefault}{\updefault}{\color[rgb]{0,0,0}$Z^\star$}%
}}}}
\put(8026,-6736){\makebox(0,0)[lb]{\smash{{\SetFigFont{12}{14.4}{\rmdefault}{\mddefault}{\updefault}{\color[rgb]{0,0,0}$\vec{0}$}%
}}}}
\put(6301,-6211){\makebox(0,0)[lb]{\smash{{\SetFigFont{12}{14.4}{\rmdefault}{\mddefault}{\updefault}{\color[rgb]{0,0,0}$Z_{\rm ES}$}%
}}}}
\put(1876,-2761){\makebox(0,0)[lb]{\smash{{\SetFigFont{12}{14.4}{\rmdefault}{\mddefault}{\updefault}{\color[rgb]{0,0,0}$Z_{\rm EI}$}%
}}}}
\put(3751,-4336){\makebox(0,0)[lb]{\smash{{\SetFigFont{12}{14.4}{\rmdefault}{\mddefault}{\updefault}{\color[rgb]{0,0,0}$Z_{\rm ES} + Z_{\rm EI} = T$}%
}}}}
\put(3001,-5686){\makebox(0,0)[lb]{\smash{{\SetFigFont{12}{14.4}{\rmdefault}{\mddefault}{\updefault}{\color[rgb]{0,0,0}${\sf Super}\, V$}%
}}}}
{\color[rgb]{0,0,0}\put(8101,-6361){\vector( 0, 1){3900}}
}%
\end{picture}%

   \caption{Region ${\sf Super}\,V\subset \R^2_{\geq 0}$ parametrizing a set of super-solutions. A maximum for a suitable rectangle $R\subset {\sf Super}\,V$ is attained at the vertex of $R$ opposed to $\vec{0}$, $Z^\star=(Z^\star_{\rm S},Z^\star_{\rm I})\in {\sf Super}\,V$.}
   \label{fig:Super}
\end{figure}

\subsection{Uniqueness of the almost periodic solution in Theorem~\ref{teo:principal}}

\begin{lemma}
The region 
\[
	U^\star=\{0\leq\omega_{\rm S}\leq \omega^\star_{\rm S},
	0\leq\omega_{\rm I}\leq \omega^\star_{\rm I}\}\times 
	{\sf Super}\, V	\subset\mathbb R^4_{\geq 0}
\]
where
\[
	\omega^\star_{\rm S}=\frac{Tk_2+(F_{\rm S})^*}{\xi_{\rm S}},
	\quad
	\omega^\star_{\rm I}=\frac{Tk_4+(F_{\rm I})^*}{\xi_{\rm I}},
\]
is an attractor.
\end{lemma}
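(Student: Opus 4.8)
The plan is to show that $U^\star$ is compact, forward invariant under \eqref{system}, and absorbs every trajectory issued from the physically meaningful region
\[
	P:=\{\,c\in\R^4\,:\,c_{\rm S},c_{\rm I}\ge 0,\ c_{\rm ES},c_{\rm EI}\ge 0,\ c_{\rm ES}+c_{\rm EI}\le T\,\},
\]
which is exactly the image of $\mathsf S\cap\R^5_{\ge 0}$ after eliminating $c_{\rm E}=T-c_{\rm ES}-c_{\rm EI}$ through the conservation law \eqref{eq:law}; compactness of $U^\star$ is immediate. First I would record that $P$ is forward invariant by a Nagumo-type tangency check along $\partial P$: on $\{c_{\rm ES}=0\}$ one has $\dot c_{\rm ES}=k_1(T-c_{\rm EI})c_{\rm S}\ge 0$, on $\{c_{\rm EI}=0\}$ one has $\dot c_{\rm EI}=k_5(T-c_{\rm ES})c_{\rm I}\ge 0$, on $\{c_{\rm ES}+c_{\rm EI}=T\}$ (that is $c_{\rm E}=0$) one gets $\dot c_{\rm ES}+\dot c_{\rm EI}=-(k_2+k_3)c_{\rm ES}-k_4c_{\rm EI}\le 0$, and on $\{c_{\rm S}=0\}$, $\{c_{\rm I}=0\}$ one gets $\dot c_{\rm S}=k_2c_{\rm ES}+F_{\rm S}(t)>0$, $\dot c_{\rm I}=k_4c_{\rm EI}+F_{\rm I}(t)>0$. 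In particular the $(c_{\rm ES},c_{\rm EI})$ block of any solution is trapped in the triangle ${\sf Super}\,V=\{Z_{\rm ES},Z_{\rm EI}\ge 0,\ Z_{\rm ES}+Z_{\rm EI}\le T\}$, so along $P$ one has the crude bound $0\le c_{\rm ES},c_{\rm EI}\le T$.

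The substance of the argument is then confined to the $(c_{\rm S},c_{\rm I})$ block. Working on $P$, the mass--action term $-k_1(T-c_{\rm ES}-c_{\rm EI})c_{\rm S}=-k_1c_{\rm E}c_{\rm S}$ is $\le 0$, while $k_2c_{\rm ES}\le k_2T$ and $F_{\rm S}(t)\le (F_{\rm S})^\ast$, so that
\[
	\dot c_{\rm S}\ \le\ k_2T+(F_{\rm S})^\ast-\xi_{\rm S}c_{\rm S}\ =\ -\xi_{\rm S}\bigl(c_{\rm S}-\omega^\star_{\rm S}\bigr),
\]
and symmetrically $\dot c_{\rm I}\le -\xi_{\rm I}(c_{\rm I}-\omega^\star_{\rm I})$, with $\omega^\star_{\rm S},\omega^\star_{\rm I}$ precisely the constants in the statement. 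A Gronwall / scalar comparison estimate applied to $c_{\rm S}-\omega^\star_{\rm S}$ (respectively $c_{\rm I}-\omega^\star_{\rm I}$) then yields $c_{\rm S}(t)-\omega^\star_{\rm S}\le (c_{\rm S}(0)-\omega^\star_{\rm S})^{+}e^{-\xi_{\rm S}t}$, so the slab $\{0\le c_{\rm S}\le\omega^\star_{\rm S}\}$ is forward invariant inside $P$ and $\limsup_{t\to\infty}c_{\rm S}(t)\le\omega^\star_{\rm S}$; the same conclusion holds for $c_{\rm I}$.

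Putting the two steps together, $U^\star=[0,\omega^\star_{\rm S}]\times[0,\omega^\star_{\rm I}]\times{\sf Super}\,V$ is a compact forward invariant subset of $P$, and every solution of \eqref{system} starting in $P$ has its $\omega$-limit set contained in $U^\star$ (the $(c_{\rm ES},c_{\rm EI})$ part is already trapped in ${\sf Super}\,V$, and the $(c_{\rm S},c_{\rm I})$ part is asymptotically confined to the rectangle $[0,\omega^\star_{\rm S}]\times[0,\omega^\star_{\rm I}]$ by the exponential estimates), which is the desired attractor property. I expect the only genuinely delicate point to be bookkeeping rather than mathematics: the linear differential inequalities for $c_{\rm S},c_{\rm I}$ rely on the bound $c_{\rm ES}\le T$, which is licit only because the invariance of $P$ (hence of the triangle ${\sf Super}\,V$) has been secured \emph{first}; with that ordering respected the remainder is a routine comparison argument and needs none of the monotone-systems machinery of Section~\ref{sec:results}.
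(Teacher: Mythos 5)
Your argument is correct and uses the same basic ingredients as the paper's proof (boundary derivative estimates for the $(c_{\rm ES},c_{\rm EI})$ block and the observation that $\dot c_{\rm S}\le k_2T+(F_{\rm S})^\ast-\xi_{\rm S}c_{\rm S}$, which is exactly where the constants $\omega^\star_{\rm S},\omega^\star_{\rm I}$ come from), but you organize it in a different and, in one respect, safer order. The paper works directly on the complement of $U^\star$, where it simultaneously assumes $Z_{\rm ES}+Z_{\rm EI}\ge T$ and $\omega_{\rm S}\ge\omega^\star_{\rm S}$, and there the first step of its chain of inequalities, $-k_1\omega_{\rm S}(T-Z_{\rm ES}-Z_{\rm EI})\le -k_1\omega^\star_{\rm S}(T-Z_{\rm ES}-Z_{\rm EI})$, only goes in the stated direction when $T-Z_{\rm ES}-Z_{\rm EI}\ge 0$; on the region the paper is nominally considering that factor is nonpositive, so the estimate as written needs the prior knowledge that the triangle ${\sf Super}\,V$ has already been entered. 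Your version makes that ordering explicit: you first secure forward invariance (and attractivity) of the physical polytope $P$ obtained from the conservation law \eqref{eq:law} by a Nagumo tangency check on each face, and only then run the scalar comparison $\dot c_{\rm S}\le -\xi_{\rm S}(c_{\rm S}-\omega^\star_{\rm S})$, which is licit because $0\le T-c_{\rm ES}-c_{\rm EI}$ and $c_{\rm ES}\le T$ hold along the whole forward orbit. What you gain is an explicit exponential absorption rate ($c_{\rm S}(t)-\omega^\star_{\rm S}\le (c_{\rm S}(0)-\omega^\star_{\rm S})^{+}e^{-\xi_{\rm S}t}$) and a cleaner logical dependence; what the paper's phrasing gains is brevity and a statement of attractivity that does not presuppose the initial data lie in $P$. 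Both routes avoid the monotone-systems machinery, as you note, and both establish that $U^\star$ is compact, positively invariant and absorbs every solution of \eqref{system} issued from the positive stoichiometric region.
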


\begin{proof}
We consider first $\omega,Z\in \R^2_{\geq 0}$. If $T -{Z}_{\rm ES}-{Z}_{\rm EI}<0$, then
\[
	\dot{Z}_{\rm ES}+\dot{Z}_{\rm EI}
	=
	(T-Z_{\rm ES}-Z_{\rm EI})(k_1\omega_{\rm S}+k_5\omega_{\rm I})
	-(k_2+k_3)Z_{\rm ES}-k_4Z_{\rm EI}<0.
\]
Hence $ \mathbb R^2_{\geq 0}\times \{(Z_{\rm ES},Z_{\rm EI})\,:\,0\leq Z_{\rm ES}+Z_{\rm EI}\leq  T\}$ is an attractive region for solutions. For a point in the complement,
\[\begin{aligned}
	 \omega_{\rm S}\geq   \omega_{\rm S}^{\star} ,\quad  
	 \omega_{\rm I}\geq \omega_{\rm I}^\star,\quad
	Z_{\rm ES}+Z_{\rm EI}\geq T,
\end{aligned}\]
we have decreasing linear functions $\omega_{\rm S}$ and $\omega_{\rm I}$ along the solutions for, i.e.
\[\begin{aligned}
	\dot{\omega}_{\rm S}
	&= 
	-k_1\omega_{\rm S}(T-Z_{\rm ES}-Z_{\rm EI})+k_2Z_{\rm ES}+F_{\rm S}-\xi_{\rm S}\omega_{\rm S}
	\\&\leq
	-k_1\omega^\star_{\rm S}(T-Z_{\rm ES}-Z_{\rm EI})+k_2Z_{\rm ES}+F_{\rm S}-\xi_{\rm S}\omega^\star_{\rm S}
	\\
	&\leq 
	-\omega^\star_{\rm S}\left(k_1(T-Z_{\rm ES}-Z_{\rm EI})+\xi_{\rm S}\right)
	+k_2T+(F_{\rm S})^*
	\\
	&\leq 
	-\omega^\star_{\rm S}\xi_{\rm S}
	+k_2T+(F_{\rm S})^*
	\\
	&\leq 
	-\frac{Tk_2+(F_{\rm S})^*}{\xi_{\rm S}}\xi_{\rm S}
	+k_2T+(F_{\rm S})^*
	\\
	&\leq 
	0,
\end{aligned}\]
A similar argument can be shown to $\dot{\omega}_{\rm I}$. Thus,
\[\begin{aligned}
\dot{\omega}_{\rm S}\leq
	0,
\quad
\dot{\omega}_{\rm I}
	\leq 0.
\end{aligned}\]
Therefore, $U^\star$ is an attractive region. \end{proof}

Now we can prove that the region
\[
	R=\{(Z_{\rm ES},Z_{\rm EI})\,:\, 0\leq Z_{\rm ES}\leq T,\, 0\leq Z_{\rm EI}\leq T/2\}
	\subset \mathsf{Super}\, V,
\]
which has a maximum $Z^\star$ (see Fig. \ref{fig:Super}), defines a suitable positive invariant region.

\begin{lemma}
The region $U\subset \mathbb R_{\geq 0}^4$ defined as
\[
	U :\qquad
	0\leq \omega_{\rm S}\leq \omega^\star_{\rm S},
	\quad
	0\leq \omega_{\rm I}\leq \omega^\star_{\rm I},
	\quad
	0\leq Z_{\rm ES}\leq T/2,
	\quad
	0\leq Z_{\rm EI}\leq T/2,
\]
is an attractor and positively invariant.

\end{lemma}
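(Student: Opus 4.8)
The plan is to establish the two assertions — positive invariance and attractivity — separately, taking the preceding Lemma (attractivity of $U^\star$) as the starting point. For \emph{positive invariance} I would use the standard Nagumo subtangentiality criterion: since $U$ is the box $[0,\omega^\star_{\rm S}]\times[0,\omega^\star_{\rm I}]\times[0,T/2]\times[0,T/2]$, it suffices to check that on each of its eight faces the corresponding component of the vector field $V(c,F(t))$ of \eqref{system} points into $U$, for every $t\in\R$. Four of these are immediate: on $\{c_{\rm S}=0\}$ we have $\dot c_{\rm S}=k_2c_{\rm ES}+F_{\rm S}(t)\geq 0$; on $\{c_{\rm I}=0\}$, $\dot c_{\rm I}=k_4c_{\rm EI}+F_{\rm I}(t)\geq 0$; on $\{c_{\rm ES}=0\}$, $\dot c_{\rm ES}=k_1(T-c_{\rm EI})c_{\rm S}\geq 0$ since $c_{\rm EI}\leq T/2<T$; and on $\{c_{\rm EI}=0\}$, $\dot c_{\rm EI}=k_5(T-c_{\rm ES})c_{\rm I}\geq 0$ for the same reason. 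On the face $\{c_{\rm S}=\omega^\star_{\rm S}\}$ one uses that inside $U$ the stoichiometric inequality $c_{\rm ES}+c_{\rm EI}\leq T$ holds, so $-k_1(T-c_{\rm ES}-c_{\rm EI})\omega^\star_{\rm S}\leq 0$, while $k_2c_{\rm ES}+F_{\rm S}(t)\leq k_2T+(F_{\rm S})^*=\xi_{\rm S}\omega^\star_{\rm S}$; hence $\dot c_{\rm S}\leq 0$, and symmetrically $\dot c_{\rm I}\leq 0$ on $\{c_{\rm I}=\omega^\star_{\rm I}\}$ with $k_4$ in place of $k_2$.

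The two remaining faces, $\{c_{\rm ES}=T/2\}$ and $\{c_{\rm EI}=T/2\}$, are the crux. On the first,
\[
	\dot c_{\rm ES}\big|_{c_{\rm ES}=T/2}
	=k_1\!\left(\tfrac{T}{2}-c_{\rm EI}\right)c_{\rm S}-(k_2+k_3)\tfrac{T}{2},
\]
and the plan is to bound the production term by exploiting that on this face the complex is pinned near the boundary $c_{\rm ES}+c_{\rm EI}=T$ of the stoichiometric simplex — the line which the rectangle $R$ touches exactly at its vertex $Z^\star=(T/2,T/2)$, see Fig.~\ref{fig:Super} — so that $0\le \tfrac{T}{2}-c_{\rm EI}$ is as small as possible there, and to combine this with $0\le c_{\rm S}\le\omega^\star_{\rm S}$ and the bounds furnished by the preceding Lemma. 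Reducing $\dot c_{\rm ES}\le 0$ to an admissible inequality among the rate constants and the supply bounds is the single computational point on which the choice $Z^\star=(T/2,T/2)$ is essential, and it is the step I expect to be the main obstacle; the face $\{c_{\rm EI}=T/2\}$ is entirely analogous.

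For \emph{attractivity} I would argue in two stages. First, the preceding Lemma already yields that every trajectory of \eqref{system} enters $U^\star=[0,\omega^\star_{\rm S}]\times[0,\omega^\star_{\rm I}]\times{\sf Super}\,V$ in finite time and remains there; moreover, as in its proof, $\dot c_{\rm ES}+\dot c_{\rm EI}=(T-c_{\rm ES}-c_{\rm EI})(k_1c_{\rm S}+k_5c_{\rm I})-(k_2+k_3)c_{\rm ES}-k_4c_{\rm EI}<0$ whenever $c_{\rm ES}+c_{\rm EI}\ge T$, so $\{c_{\rm ES}+c_{\rm EI}\le T\}$ is eventually reached and never left. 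Second, once the trajectory is confined to $U^\star\cap\{c_{\rm ES}+c_{\rm EI}\le T\}$, the coordinates $c_{\rm S},c_{\rm I},c_{\rm ES},c_{\rm EI}$ are all bounded, and applying the face estimates of the previous paragraph along the trajectory (rather than merely on $\partial U$) forces $(c_{\rm ES},c_{\rm EI})$ into the square $R=[0,T/2]^2$ as $t\to\infty$, while the $\omega$-coordinates are already trapped in $[0,\omega^\star_{\rm S}]\times[0,\omega^\star_{\rm I}]$ by the preceding Lemma. Putting the two assertions together gives that $U$ is positively invariant and is reached by every trajectory, which is exactly the statement.
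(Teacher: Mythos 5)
Your overall strategy --- checking that the vector field points inward on every face of the box $U$ --- is essentially the paper's own (it evaluates $V$ on the faces $\partial U\cap\partial C_i$, $i=1,\dots,8$), and your treatment of six of the eight faces ($c_{\rm S}=0$, $c_{\rm I}=0$, $c_{\rm ES}=0$, $c_{\rm EI}=0$, $c_{\rm S}=\omega^\star_{\rm S}$, $c_{\rm I}=\omega^\star_{\rm I}$) is correct and matches the paper's computations for $C_3$--$C_6$. But the two faces you yourself call the crux, $c_{\rm ES}=T/2$ and $c_{\rm EI}=T/2$, are exactly where the content of the lemma lies, and they are left unproved. The heuristic you offer to close them is not available: on the face $c_{\rm ES}=T/2$ nothing ``pins'' the state near the line $c_{\rm ES}+c_{\rm EI}=T$; the entire segment $0\le c_{\rm EI}\le T/2$ lies on $\partial U$, and the production term $k_1\left(\tfrac{T}{2}-c_{\rm EI}\right)c_{\rm S}$ is largest precisely at $c_{\rm EI}=0$, i.e.\ as far from that line as possible.

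Moreover, the inequality you hope to ``reduce to an admissible inequality among the rate constants'' is false in general: at the boundary point $c_{\rm S}=\omega^\star_{\rm S}$, $c_{\rm ES}=T/2$, $c_{\rm EI}=0$ one gets $\dot c_{\rm ES}=\tfrac{T}{2}\bigl(k_1\omega^\star_{\rm S}-(k_2+k_3)\bigr)$, which is positive unless $k_1\bigl(k_2T+(F_{\rm S})^*\bigr)\le \xi_{\rm S}(k_2+k_3)$ --- a genuine parameter restriction that fails even for the paper's own numerical values ($k_1=0.95$, $k_2=0.3$, $k_3=0.9$, $\xi_{\rm S}=1$, $T=1$, $(F_{\rm S})^*=1.5$ give $k_1\omega^\star_{\rm S}=1.71>k_2+k_3=1.2$). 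So no amount of algebra on that face alone yields the Nagumo condition; you would need either such an extra hypothesis or a different argument on that portion of the boundary. (For comparison, the paper's proof at its region $C_1$ bounds $\dot Z_{\rm ES}\le-(k_2+k_3)Z_{\rm ES}$, i.e.\ it simply discards the production term $k_1(T-Z_{\rm ES}-Z_{\rm EI})\omega_{\rm S}$, which is only legitimate where $Z_{\rm ES}+Z_{\rm EI}\ge T$; so the step you deferred is precisely the delicate one and cannot be dismissed as ``a single computational point.'') Your attractivity argument inherits the same gap, since its second stage invokes these very face estimates along trajectories; only its first stage --- eventual confinement to $\{c_{\rm ES}+c_{\rm EI}\le T\}$ and to $[0,\omega^\star_{\rm S}]\times[0,\omega^\star_{\rm I}]$ via the preceding Lemma --- is secure.
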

\begin{proof}
We evaluate the vector field $V$ along the different faces of the boundary, $\partial U\cap \partial C_i$ where $C_i$ are polytopes in the neighborhood of $U$, $i=1,\dots 8$, to verify that $V$ points towards the interior of such region. 

For the different faces of $U$ we get the corresponding inequalities as follows
\[\begin{aligned}
	C_1 :\quad &
	 0\leq \omega_{\rm S}\leq \omega_{\rm S}^\star,
	 \, 0\leq \omega_{\rm I}\leq \omega_{\rm I}^\star,
	\,  Z_{\rm ES} > T/2,
	\,  0\leq Z_{\rm EI}\leq T/2<0,
\\
	&\dot{Z}_{\rm ES}\vert_{C_1}
	\leq
	-(k_2+k_3)Z_{\rm ES},
\end{aligned}\]
\[\begin{aligned}
	C_2 :\quad &
	 0\leq \omega_{\rm S}\leq \omega_{\rm S}^\star,
	 \, 0\leq \omega_{\rm I}\leq \omega_{\rm I}^\star,
	\,  0\leq Z_{\rm ES}\leq T/2,
	\,  Z_{\rm EI}>T/2,
\\
	&\dot{Z}_{\rm EI}\vert_{C_2}
	\leq
	-k_4Z_{\rm EI}<0,
\end{aligned}\]

\[\begin{aligned}
	C_3:\quad &
	 \omega_{\rm S}<0,
	\,
	0\leq  \omega_{\rm I}\leq \omega_{\rm I}^\star,
	\, 0\leq Z_{\rm ES}\leq T/2,\, 0\leq Z_{\rm EI}\leq T/2,
\\
	&\dot{\omega_{\rm I}}\vert_{C_3}
	>
	k_2Z_{\rm ES}+F_{\rm S}\geq 0
	,
\end{aligned}\]

\[\begin{aligned}
	C_4:\quad &
	0\leq \omega_{\rm S}\leq \omega_{\rm S}^\star,
	\,
	 0> \omega_{\rm I},
	\, 0\leq Z_{\rm ES}\leq T/2,\, 0\leq Z_{\rm EI}\leq T/2,
\\
	&\dot{\omega_{\rm I}}\vert_{C_4}
	>
	k_4Z_{\rm EI}+F_{\rm I}\geq 0
	,
\end{aligned}\]

\[\begin{aligned}
	C_5:\quad &
	 \omega_{\rm S}>\omega_{\rm S}^\star,
	\,
	0\leq  \omega_{\rm I}\leq\omega^\star_{\rm I},
	\, 0\leq Z_{\rm ES}\leq T/2,\, 0\leq Z_{\rm EI}\leq T/2,
\\
	&\dot{\omega_{\rm S}}\vert_{C_5}
	< 
	 k_2 Z_{\rm ES}+F_{\rm S}-\xi_{\rm S}\omega^\star_{\rm S}
\\	
	&\leq 
	k_2T/2-\xi_{\rm S}\frac{k_2T+(F_{\rm S})^*}{\xi_{\rm S}}  +(F_{\rm S})^*
\\
	&
	\leq - k_2T/2\leq 0.
\end{aligned}\]

\[\begin{aligned}
	C_6:\quad &
	0\leq \omega_{\rm S}\leq \omega_{\rm S}^0,
	\,
	 \omega_{\rm I}>\omega^0_{\rm I},
	\, 0\leq Z_{\rm ES}\leq T/2,\, 0\leq Z_{\rm EI}\leq T/2,
\\
	&\dot{\omega_{\rm I}}\vert_{C_6}
	<
	k_4 Z_{\rm EI}+F_{\rm I}-\xi_{\rm I}\omega^\star_{\rm I}
\\	
	&\leq 
	k_4 T/2+(F_{\rm I})^*-\xi_{\rm I} \frac{k_4T+(F_{\rm I})^*}{\xi_{\rm I}} 
	\\
	&
	\leq -k_4 T/2\leq 0.
\end{aligned}\]

The verification in the faces $Z_{\rm ES}=0$ and $Z_{\rm EI}=0$ is similar. \end{proof}

So, let us consider the set of positive almost periodic solutions within the positively invariant region $U$ and for which $(\omega^\star,\vec{0})$ is a sub-solution while $(\vec{0},Z^\star)$ is a super-solution. Remark that $(\omega^\star,\vec{0})\preceq (\omega^0,\vec{0})$, see Fig.~\ref{fig:sub-Super}.

Let us denote the minimal and maximal solutions of such set as,
\[
	\check{c}=\left(\begin{array}{c}
		\check{c}_{\rm S} \\
		\check{c}_{\rm I}\\ \check{c}_{\rm EI} \\ \check{c}_{\rm ES}
	\end{array}\right),
	\qquad
		\hat{c}=\left(\begin{array}{c}
		\hat{c}_{\rm S}\\ \hat{c}_{\rm I}\\ \hat{c}_{\rm EI}\\ \hat{c}_{\rm ES}
	\end{array}\right),
\]
respectively. For the sake of brevity we denote 
\[
	\delta_{\rm A}=M\left[\hat{c}_{\rm A}-\check{c}_{\rm A}\right],
	\qquad
	\delta_{\rm A,B}=M\left[
		\hat{c}_{\rm A}\hat{c}_{\rm B}
	-\check{c}_{\rm A}\check{c}_{\rm B}\right].
\] 
Then, by subtracting and by taking the mean values in \eqref{system} we get
\begin{subequations}
\begin{equation}\label{eq:deltaS}
 0 =
 	-k_1T\delta_{\rm S}+k_1\delta_{\rm ES,S}+k_1\delta_{\rm EI,S}-\xi_{\rm S}\delta_{\rm S}
 \end{equation}
 \begin{equation}\label{eq:deltaI}
0 =
 	-k_5T\delta_{\rm I}+k_5\delta_{\rm ES,I}+k_5\delta_{\rm EI,S}-\xi_{\rm I}\delta_{\rm I}
 \end{equation}
 \begin{equation}\label{eq:deltaES}
   0 =
	k_1T \delta_{\rm S}
	-k_1\delta_{\rm ES,S}
	-k_1\delta_{\rm EI,S}-(k_2+k_3)\delta_{\rm ES},
 \end{equation}
 \begin{equation}\label{eq:deltaEI}
   0 =
	k_5T \delta_{\rm I}
	-k_5\delta_{\rm ES,I}
	-k_5\delta_{\rm EI,I}-k_4\delta_{\rm EI}.
\end{equation}
\end{subequations}

Our aim to attain the proof of uniqueness is to apply a well known result, written as Lemma \ref{lma:parseval} below. In order to do so we need to prove that
\[
	\delta_{\rm S}=\delta_{\rm I}=\delta_{\rm ES}=\delta_{\rm EI}=0.
\]
to conclude that
\[
	\hat{c}_{\rm S}(t)=\check{c}_{\rm S}(t),\quad
	\hat{c}_{\rm I}(t)=\check{c}_{\rm I}(t),\quad
	\hat{c}_{\rm ES}(t)=\check{c}_{\rm ES}(t),\quad
	\hat{c}_{\rm EI}(t)=\check{c}_{\rm EI}(t).
\]
We notice that by the partial ordering we get the following signed mean values,
\[
	\delta_{\rm S}\leq0,\quad
	\delta_{\rm I}\leq0,\quad
	\delta_{\rm ES}\geq0,\quad
	\delta_{\rm EI}\geq0.
\]
By adding \eqref{eq:deltaS} to \eqref{eq:deltaS} and \eqref{eq:deltaI} to \eqref{eq:deltaEI} we obtain
\begin{subequations}\label{eq:dsdI}
\begin{equation}\label{eq:dS+dES}
	\xi_{\rm S}\delta_{\rm S} + (k_2+k_3)\delta_{\rm ES}=0
\end{equation}
\begin{equation}\label{eq:dI+dEI}
	\xi_{\rm I}\delta_{\rm I} + k_4\delta_{\rm EI}=0
\end{equation}
\end{subequations}
Substitution of \eqref{eq:dS+dES} into \eqref{eq:deltaS} and of \eqref{eq:dI+dEI} into \eqref{eq:deltaI} yields
\[
	k_1(\delta_{\rm ES,S}+\delta_{\rm EI,S})
	=(k_1T+\xi_ {\rm S})\delta_{\rm S}\leq 0, 
	\quad
	k_5(\delta_{\rm ES,I}+\delta_{\rm EI,I})
	=(k_5T+\xi_{\rm I})\delta_{\rm I}\leq 0.
\]
Therefore,
\[
	\delta_{\rm ES,S}+\delta_{\rm EI,S}\leq 0,
	\quad
	\delta_{\rm ES,I}+\delta_{\rm EI,I}\leq 0,
\]
which allows us to derive the following inequalities,
\begin{equation}\label{eq:penultima}
\begin{aligned}
	0&\leq
	M\left[
		\left(	\hat{c}_{\rm ES}+\hat{c}_{\rm EI}\right)\hat{c}_{\rm S}
	\right]
	\leq
	M\left[
		\left(	\check{c}_{\rm ES}+\check{c}_{\rm EI}\right)\check{c}_{\rm S}
	\right]
	\\
	0&\leq
	M\left[
		\left(	\hat{c}_{\rm ES}+\hat{c}_{\rm EI}\right)\hat{c}_{\rm I}
	\right]
	\leq
	M\left[
		\left(	\check{c}_{\rm ES}+\check{c}_{\rm EI}\right)\check{c}_{\rm I}
	\right]	
\end{aligned}\end{equation}
On the other hand, we know from the ordering relations that
$
		\check{c}_{\rm ES}+\check{c}_{\rm EI}\leq 
		\hat{c}_{\rm ES}+\hat{c}_{\rm EI}
$ or 
\begin{equation}\label{eq:ultima}
\begin{aligned}
		(\check{c}_{\rm ES}+\check{c}_{\rm EI})
		\check{c}_{\rm I}
		&\leq 
		(\hat{c}_{\rm ES}+\hat{c}_{\rm EI})
		\check{c}_{\rm I}
		\\
		(\check{c}_{\rm ES}+\check{c}_{\rm EI})
		\check{c}_{\rm S}
		&\leq 
		(\hat{c}_{\rm ES}+\hat{c}_{\rm EI})
		\check{c}_{\rm S}
\end{aligned}	
\end{equation}
If we take the mean value of \eqref{eq:ultima} and combine it with \eqref{eq:penultima} we conclude that
\[\begin{aligned}
	M\left[
		\left(	\hat{c}_{\rm ES}+\hat{c}_{\rm EI}\right)\hat{c}_{\rm S}
	\right]
	&\leq
	M\left[
		\left(	\hat{c}_{\rm ES}+\hat{c}_{\rm EI}\right)\check{c}_{\rm S}
	\right]
	\\
	M\left[
		\left(	\hat{c}_{\rm ES}+\hat{c}_{\rm EI}\right)\hat{c}_{\rm I}
	\right]
	&\leq
	M\left[
		\left(	\hat{c}_{\rm ES}+\hat{c}_{\rm EI}\right)\check{c}_{\rm I}
	\right]	
\end{aligned}\]
or
\[\begin{aligned}
	0
	&\leq
	M\left[
		\left(	\hat{c}_{\rm ES}+\hat{c}_{\rm EI}\right)
		(\check{c}_{\rm S}-\hat{c}_{\rm S})
	\right]\leq 0
	\\
	0
	&\leq
	M\left[
		\left(	\hat{c}_{\rm ES}+\hat{c}_{\rm EI}\right)
		(\check{c}_{\rm I}-\hat{c}_{\rm I})
	\right]\leq 0
\end{aligned}\]
Thus
\[
		\left(	\hat{c}_{\rm ES}+\hat{c}_{\rm EI}\right)
		(\check{c}_{\rm I}-\hat{c}_{\rm I})=0
		=
		\left(	\hat{c}_{\rm ES}+\hat{c}_{\rm EI}\right)
		(\check{c}_{\rm S}-\hat{c}_{\rm S})
\]

If $\hat{c}_{\rm ES}+\hat{c}_{\rm EI}>0$ then 
\[
	\delta_{\rm S}=0=\delta_{\rm I},
\]
which in its turn by \eqref{eq:dsdI} implies the remaining identities $\delta_{\rm ES}=0=\delta_{\rm EI}$.
Otherwise, $\hat{c}_{\rm ES}+\hat{c}_{\rm EI}=0$, then $0\leq\hat{c}_{\rm ES}= -\hat{c}_{\rm EI}\leq 0$. Hence,
\[
	\hat{c}_{\rm ES}=\hat{c}_{\rm EI}.
\]
Analogous reasonings, yield $\check{c}_{\rm ES}=\check{c}_{\rm EI}$. Thus $\delta_{\rm ES}=0=\delta_{\rm EI}$.

We proceed by further simplifications to conclude that
\[
	{\hat{c}_{\sigma}}= {\check{c}_{\sigma}},
\quad \forall \sigma\in\{{\rm S,I,ES,EI}\}.
\]
This proves the uniqueness claim.

\begin{lemma}\label{lma:parseval}
Let $\hat{\phi},\check{\phi}$ be almost periodic functions such that 
\[
	\hat{\phi}(t)\geq\check{\phi}(t)\geq 0,\qquad
	{M}\left[\hat{\phi}\right]= {M}\left[\check{\phi}\right].
\]
Then $\hat{\phi}(t)=\check{\phi}(t)$ for every $t\in\R$.
\end{lemma}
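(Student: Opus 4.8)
The plan is to reduce the lemma to the statement that a nonnegative continuous almost periodic function with vanishing mean is identically zero. Put $\psi := \hat{\phi} - \check{\phi}$. Since $\mathcal{AP}(\R,\C)$ is a vector space, $\psi$ is almost periodic; by hypothesis $\psi(t) \ge 0$ for every $t$, and by linearity of the mean $M[\psi] = M[\hat{\phi}] - M[\check{\phi}] = 0$. Hence it suffices to prove $\psi \equiv 0$, which immediately yields $\hat{\phi}(t) = \check{\phi}(t)$ for all $t \in \R$.

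For this I would invoke Parseval's equality, which is the reason for the name of the lemma. As $\psi$ is bounded, set $\psi^\ast := \sup_{t\in\R}\psi(t) < \infty$; then $0 \le \psi(t)^2 \le \psi^\ast\,\psi(t)$ pointwise, and applying the positive linear functional $M$ gives $0 \le M[|\psi|^2] \le \psi^\ast M[\psi] = 0$, so $M[|\psi|^2] = 0$. Since Parseval's equality reads $M[|\psi|^2] = \sum_{k} |c[\psi,\lambda_k]|^2$, every Fourier coefficient of $\psi$ vanishes, so the Fourier series of $\psi$ is trivially zero. The uniqueness theorem for almost periodic functions (an element of $\mathcal{AP}(\R,\C)$ is determined by its Fourier series, a consequence of Bohr's approximation theorem; see \cite{Bohr, Corduneanu}) then forces $\psi \equiv 0$.

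I expect the only real obstacle to be this last implication ``$M[|\psi|^2] = 0 \Rightarrow \psi \equiv 0$'', i.e.\ the uniqueness/approximation theorem for almost periodic functions; everything else is bookkeeping. If one wishes to keep the argument self-contained, I would instead argue by contradiction directly from almost periodicity: were $\psi(t_0) = 2c > 0$ for some $t_0$, continuity of $\psi$ would give $\delta > 0$ with $\psi > c$ on $(t_0-\delta, t_0+\delta)$, and relative density of the set of $(c/2)$-translation numbers of $\psi$ would give a length $\ell > 0$ (which we may take $\ge 2\delta$) such that every interval of length $\ell$ contains a $\tau$ with $\sup_t |\psi(t+\tau) - \psi(t)| < c/2$, hence $\psi > c/2$ on the translated interval $(t_0+\tau-\delta, t_0+\tau+\delta)$ of fixed length $2\delta$. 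These ``good'' intervals recur along $\R$ with gaps at most $\ell$, so the set $\{s \in [0,T] : \psi(s) > c/2\}$ has measure growing at least linearly in $T$, whence $M[\psi] = \lim_{T\to\infty}\frac1T\int_0^T\psi(s)\,ds \ge \tfrac{c\delta}{2\ell} > 0$, contradicting $M[\psi] = 0$. Either route closes the argument, and hence the proof of uniqueness in Theorem~\ref{teo:principal}.
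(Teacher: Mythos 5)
Your proposal is correct. The paper itself states this lemma without proof, introducing it only as ``a well known result,'' but the label \texttt{lma:parseval} and the inclusion of Parseval's equality among the listed properties of $\mathcal{AP}(\R,\C)$ make clear that your first route is exactly the intended one: setting $\psi=\hat{\phi}-\check{\phi}\geq 0$ with $M[\psi]=0$, bounding $0\leq M[|\psi|^2]\leq \psi^{\ast}M[\psi]=0$, and concluding from Parseval together with the uniqueness theorem that $\psi\equiv 0$. Your alternative, self-contained argument via relative density of the $(c/2)$-translation numbers is also sound (it is the standard proof that a nonnegative almost periodic function with zero mean vanishes identically, and it has the advantage of not invoking the Fourier-uniqueness theorem); either version would serve as a complete proof of the lemma that the paper leaves implicit.
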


\section{Numerical examples}\label{sec:examples}

We adopt the following forcing terms
\[
	F_{\rm S}(t)=1+\cos{t},
	\qquad
	F_{\rm I}(t)=1+\sin (\pi t),
\]
which are not synchronized and have upper bounds 
\[
	(F_{\rm S})^*=1.5, \,(F_{\rm I})^*=1.5,\,
	(F_{\rm S})_*=0, \,(F_{\rm I})_*=0.
\]
For the decay rate values we take  $\xi_{\rm I}=\xi_{\rm S}=1$. Finally,
\[
	k_1=0.95,\quad k_2=0.3,\quad k_3=0.9,
	\quad k_4=0.8,\quad k_5=0.3.
\]
Numerical evidence for $T=1$ in Fig. \ref{fig:No} shows an almost periodic global attractor.

\begin{figure}[h]
\begin{center}
\includegraphics[width=2.5in]{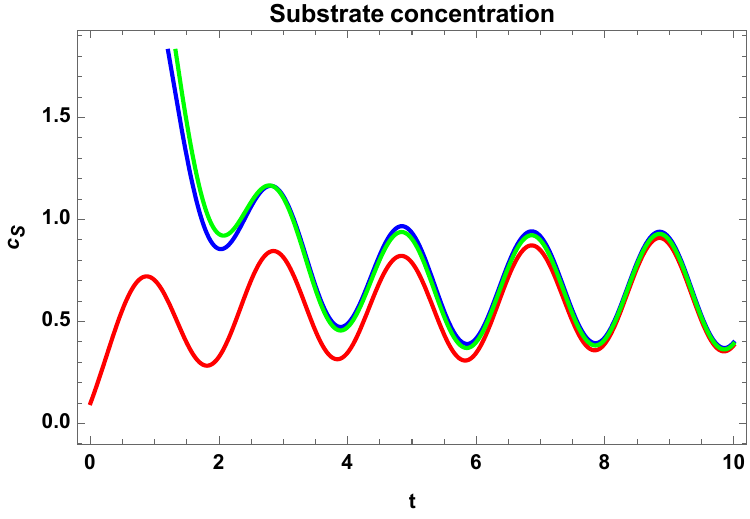}
\includegraphics[width=2.5in]{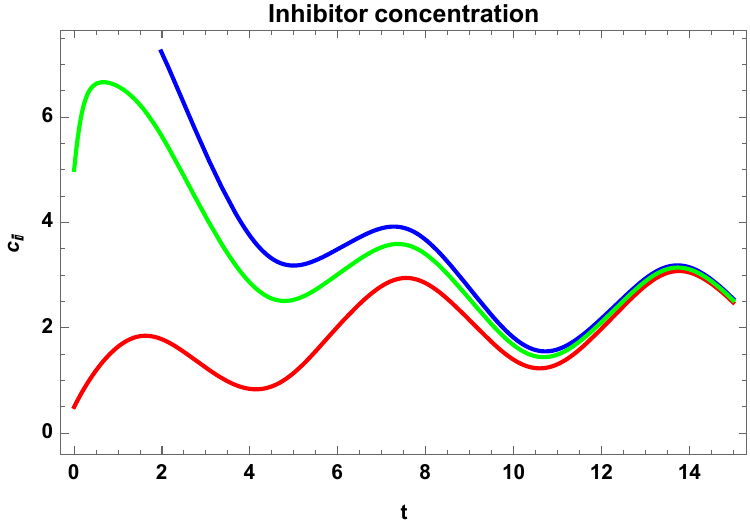}
\caption{An almost periodic solution $(c_{\rm S},c_{\rm I})$ attained as asymptotic limit by several initial conditions.}
\label{fig:No}
\end{center}
\end{figure}

\section{Discussion}

We have proved global stability of almost periodic solutions in enzyme catalysis for a specific reactor having almost periodic substrate and inhibitor supplies. This could be regarded as a first case study, i.e. just as a step in the path for the search of the most general global stability statement for a wider class of dynamical systems, namely {\em intraspecific and monotone open reaction networks.} {Thus, as we have mentioned earlier in the introduction, this work belongs to a series of articles sketching a program that addresses global stability for intraspecific monotone class of open reaction networks. The mass-action law case will be extended in \cite{DOS-1}} for other general kinetics using the tools we have developed here. We can also explore in a future work a categorical formalism, derived from the constructions exposed in \cite{Baez-2017,Baez-2018,Baez-2020} and \cite{Feinberg-2019}, for such class of monotone open systems.

\bibliographystyle{plain}
\bibliography{almost-periodic,Enzy}

\end{document}